\newtheorem{theorem}{Theorem}
\newtheorem{lemma}{Lemma}
\newtheorem{remark}{Remark}
\newtheorem{definition}{Definition}
\def\pbb {\mathbb{P}}
\begin{document}
 
\date{}

\title{ 
On the Identifiability of Finite Mixtures of Finite Product Measures
}
\author{
Behrooz~Tahmasebi, Seyed~Abolfazl~Motahari, and    Mohammad~Ali~Maddah-Ali
\thanks{
Behrooz Tahmasebi is with the Department of Electrical Engineering, Sharif University of Technology, Tehran, Iran. (email: behrooz.tahmasebi@ee.sharif.edu).
  Seyed Abolfazl  Motahari is with the Department of Computer Engineering, Sharif University of Technology, Tehran, Iran.  (email: motahari@sharif.edu).
Mohammad Ali Maddah-Ali is with  Nokia Bell Labs, Holmdel, New Jersey, USA.  (email: mohammad.maddahali@nokia-bell-labs.com).
}
\thanks{
This paper has been presented in part at IEEE International Symposium on Information Theory (ISIT) 2018\cite{early}.
}
   }
   
\renewcommand\footnotemark{}
 
\maketitle

\begin{abstract} 
The problem of identifiability of finite mixtures of  finite product measures is studied. 
A mixture model with $K$ mixture components and $L$ observed variables is considered, where each variable takes  its value in a finite set with cardinality $M$.
The variables are independent in each mixture component.
 The identifiability of a mixture model means the possibility of attaining the mixture components parameters by observing its mixture distribution.
  In this paper, 
we investigate fundamental relations between the  identifiability of  mixture models and the separability of their observed variables by introducing two types of separability: strongly and weakly separable variables. 
Roughly speaking, a  variable is said to be separable, if and only if it has some differences among its probability  distributions in  different mixture components.
We prove that  mixture models are identifiable if the number of strongly separable variables is greater than or equal to $2K-1$, independent form    $M$. This fundamental threshold is shown to be tight, where a family of non-identifiable mixture models with less than $2K-1$ strongly separable variables is provided. 
 We also show that  mixture models are identifiable if they have at least $2K$ weakly  separable variables. 
 To prove these theorems, we introduce a particular polynomial, called characteristic polynomial,  which  translates the identifiability conditions to identity of polynomials and allows us to construct an inductive proof.

\end{abstract}

\textbf{Index terms}$-$Identifiability, mixture models, characteristic	polynomial, separable variables.

\section{Introduction}

Statistical analysis of samples from mixtures of subpopulations is usually carried out by assuming that the underlying probability law is governed by a mixture model. There exists a large number  of applications 
where mixture models are central part of the data analyses \cite{book1,book2,pritch,betamix}.  
An important  one is  in population genetics, where the  mixed population  datasets are modeled by finite mixture models \cite{pritch}. 
A finite mixture model can be represented by 
\begin{align*}
\pbb_{\theta} = \sum_{k=1}^K w_k  \pbb_{k},
\end{align*}
where  $\pbb_k$ and $w_k$ are, respectively, the probability law governing  and the relative population size of the $k^\text{th}$ subpopulation. $\theta$ encapsulates all the latent parameters used to specify the model including $w_k$s. 

In a common setting, a mixture model with latent parameters is assumed to be the law governing a given dataset. Then, latent parameters are estimated by various methods such as maximum likelihood estimation. However, before starting the parameter estimation one needs to answer a very important and fundamental question: is the model identifiable? Identifiability means that there is a one to one map between the latent parameter $\theta$ and the probability law $\pbb_{\theta}$ (up to permutations of the subpopulations which will be discussed later.) \cite{15,16,27,28}.

In this paper, we consider finite mixture models with $L$ observed variables  which have the conditional independence property.
This means that  $L$  variables are distributed in each mixture component according to a product probability measure, i.e., 
$\pbb_k =  \bigotimes_{\ell \in [L]} \pbb_{k\ell}$
 for any $k\in [K]$ (see Section 2 for more explanations).
In addition, we assume that the observed variables are discrete random variables and take values in a finite   set with cardinality $M$.
This finite set is also refereed  to as the state space in the literature.  
This class of mixture models are called finite mixtures of finite product measures.
In this paper, 
by mixture models we mean this class of distributions.

Studying identifiability of statistical models has a long history \cite{15,16,27,28}.  For  finite mixtures of finite product measures the problem has been studied in various articles (see for instance \cite{ide,ide2}).  
It is well known that there are  some Bernoulli Mixture Models (BMMs) which are not identifiable \cite{22,23}. 
We note that, as mentioned in \cite{ide2}, only counting the dimensionality of the set of the latent parameters and then  comparing it to the dimension of the mixture distribution is not sufficient to establish an identifiability argument. 
A novel method has been used in \cite{ide},  based on the seminal result of Kruskal \cite{29,30},  where the authors show that the parameters    of a finite mixture of finite product measures are generically identifiable if $L  \ge 2 \lceil   \log_{M}(K)\rceil + 1$.
This means that if $L  \ge 2 \lceil   \log_{M}(K)\rceil + 1$, then the measure of the parameters that make the problem non-identifiable is zero.

In many applications, such as population genetics \cite{pritch}, after modeling the drawn dataset by mixture models, the next step is  to cluster the given dataset into subpopulations (called population stratification in the population genetics literature).
The key observation is that many observed variables are not useful for  clustering, which means their   probability distributions (pmfs)   are  (exactly)  same in the different mixture components. 
This means the set of  possible parameters for this type of mixture models  has a zero measure.
On the other side, a small fraction of the observed variables are useful for clustering, which means that they have enough separation among their pmfs to make the reliable clustering possible \cite{najaf}.
 A fundamental (and also theoretical) question  is that what is  the number of required separable variables for a mixture model  to   be identifiable? 
 We note that, for this case,  we can not use the result of generic identifiability because the parameters are in a zero measure set. 
  In this paper
we establish  connections  between identifiability of the parameters of a mixture model and the separability of its variables.   
 In this way, two notions of separability are introduced:  strongly and weakly separable variables, which will be defined in the next two paragraphs. 
 We note that our studies in this paper can be taught as a  worst-case  analysis of the identifiability of the finite mixtures of finite product measures.

In a \textit{strongly separable} variable,  the probability distributions (pmfs) of the variable in the $K$ mixture components are strictly different from each other, for  each   element of the state space. This is a natural definition for  \textit{good} variables, as the  strongly separable variables are essentially useful for  clustering the datasets drawn by mixture models.
In \cite{najaf},  a mathematical analysis of the problem of reliable clustering of datasets drawn by mixture models based on the separability of mixture components is presented. 
However, the identifiability of mixture models based on the separability conditions is not explored in \cite{najaf}. 
We  establish a condition on the number of strongly separable  variables that results the  identifiability of the parameters of the corresponding mixture model. 
In particular,   we find a  sharp threshold that the identifiability is  guaranteed, if   the number of strongly  separable  variables is greater than or equal to it. 
The threshold is $2K-1$, where $K$ is the number of mixture components. We note that this threshold is independent of $M$, the size of the state space of  the variables, which is in contrast to the result of the generic identifiability. 
 Also we show that this threshold is tight by introducing a family of non-identifiable mixture models with less than $2K-1$ strongly separable variables.

We note that  for a strongly separable variable, the number of conditions which are needed to be satisfied scales by $M$.
To study the effect of this scaling on identifiability we introduce \textit{weakly separable} variables.
In a  \textit{weakly separable} variable, the probability distributions (pmfs) of the variable in the $K$ mixture components are strictly different from each other, for at least one (not necessarily all) state space element.
This means that for any weakly separable variable, the number of required conditions does not scale with  $M$. 
Observe that  by  definitions  any strongly separable variable is also weakly separable. 
Therefore, naturally we expect that the number of required weakly separable variables for the identifiability would be very larger than $2K-1$, due to the order-wise fewer number of conditions.
However, we prove that if a mixture model has at least $2K$ weakly separable variables, then it is identifiable. 
This means that the penalty of considering   weakly separable variables instead of strongly separable variables is at most one.

We notice that the threshold of $2K-1$ is also observed in the    identifiability of the other problems, like the Latent Block Models (LBMs)  \cite{est}, the  binomial mixture models \cite{17},  the mixture models from grouped samples \cite{grouped,grouped2}  and also the topic modeling problem \cite{topic}. 
However, the model of this paper is essentially  different from them and their proofs ideas are also very different from what we develop in this paper. 
For the case of binomial mixture models,  the result of $2K-1$ can be  followed as a special case of our results, when the   mixture components have i.i.d. variables.

To prove the sufficiencies in both $2K-1$ strongly and $2K$ weakly  separable variables, we introduce a multi-variable polynomial, called the characteristic polynomial representing a mixture model. Then, we  prove that the identifiability of the mixture model is equivalent to the identifiability of its characteristic polynomial in the polynomials space. This allows us to exploit properties of  polynomials to proof the identifiability argument. For the converse, we introduce  a family of non-identifiable mixture models that have less than $2K-1$ strongly  separable variables for arbitrary $K,L$ and $M$.

The rest of the paper is organized as follows. In Section \ref{Problem Statement},  we  define the problem. In Section 
\ref{Main Results},  we  describe the main results of  the paper. Finally, Sections 
\ref{Proof of Theorem 1} and 
\ref{Proof of Theorem 2}  contain  the proofs of the main theorems of the paper.

\textbf{Notation.} In this paper  all vectors are columnar and they are denoted by bold letters, like $\bold{x}$.  
 For any positive integer $N$, we define $[N]:=\{1,2,\ldots,N\}$.  The transpose operator is denoted by $(.)^T$. 
 The  polynomial identity is denoted by $\equiv$ which is used when   two polynomials  have  the same coefficients.  
We use the notation  $\bold{x}\otimes \bold{y}$ for the 
Kronecker product of two vectors  
$\bold{x}$ and $\bold{y}$. 
For any vector $\bold{x} = (x_1,x_2,\ldots,x_n)^T \in \mathbb{R}^n$, we define $\| \bold{x} \|_{0} := \sum_{i=1}^n \mathbbm{1}\{x_i \neq 0\}$. 
Let us define 
$$
\Delta_M =  \Big \{  \bold{x} = (x_1,x_2,\ldots,x_M)^T   \in \mathbb{R}^M  \Big | \sum_{m=1}^M x_m = 1 ,    \forall m \in [M]  :  x_m  > 0 \Big  \}.
$$
Also $\overline{\Delta}_M$ denotes the closure  of $\Delta_M$.

\section{Problem Statement}
\label{Problem Statement}

We consider a  finite mixture model with  $K$ mixture components and $L$ observed variables where each observed variable takes its value in a finite set (finite state space) with cardinality $M$.
For any mixture component, we have a generative model that  the observed variables attain their realizations based on that. 
In this model, the  ${\ell}^{\text{th}}$ variable   in the $k^{\text{th}}$ mixture component  is generated from a  finite  probability measure denoted by  $\bold{f}_{ k\ell}\in \overline{\Delta}_{M}$ (we call $\bold{f}_{   k\ell}$ the  frequency vector of the $\ell^\text{th}$ variable in the $k^{\text{th}}$ mixture component). 
 We assume the conditional independence structure in the mixture model, which means that the observed variables are independent in each mixture component. 
  Let us denote the frequency of the $\ell^\text{th}$ variable, in the $k^\text{th}$ mixture component,  for any $m\in[M]$, by  ${f}_{ k \ell m}$,  as it is also denoted by $\bold{f}_{   k\ell}(m)$.
We denote the collection of the frequencies specifically for  the $k^\text{th}$  mixture component by  a matrix   $F_k=  ( \bold{f}_{   k1}|\bold{f}_{   k2}|\ldots|\bold{f}_{   kL})^T  = (f_{k \ell m})_{L \times M} \in [0,1]^{L \times M}$.
 Let $F_{1:K}:=(F_1,F_2\ldots,F_K)$  denotes the collection of  the frequencies in  the mixture model in a  specific order.

In our model, 
each data instance is generated  from one of the mixture components according to a sampling distribution  with  pmf $\bold{w}=(w_1,w_2,\ldots,w_K)^T\in \Delta_K$. 
 Let us denote the latent parameters of the mixture model by $\theta  = (F_{1:K};\bold{w})$.
 We  also denote  
 the set of all possible latent parameters of the mixture models (as defined above) by $\Theta_{K,L,M}$. 
 Also let  $\overline{\Theta}_{K,L,M}$ denotes  the closure of $\Theta_{K,L,M}$.
Here,   $\theta \in \Theta_{K,L,M} \subseteq  \overline{\Theta}_{K,L,M}$\footnote{
The main difference of $ \Theta_{K,L,M}$ and $ \overline{\Theta}_{K,L,M}$ is that in the first, the probability of sampling from each mixture component is positive,  but in the second, it may be zero.}.

 In this paper, our objective is to attain the latent  parameters $\theta = (F_{1:K};\bold{w})$, using the mixture distribution. The  formal definition of  the  mixture distribution  is as follows.
\begin{definition}
The mixture distribution  $\bold{f}\in \overline{\Delta}_{M^L}$ of a mixture model with latent parameters
$\theta = (F_{1:K};\bold{w})$
 is defined as 
\begin{align*}
 \bold{f}:= \sum_{k=1}^K w_k  \times (\bold{f}_{k1}\otimes \bold{f}_{k2} \otimes  \cdots \otimes   \bold{f}_{kL}).
\end{align*}
In other words,   for any $(m_1,m_2,\ldots,m_{L}) \in [M]^{L}$, we  define  
\begin{align*}
\bold{f}{(m_1,m_2,\ldots,m_{L})} := \sum_{k=1}^K w_k \prod_{\ell=1}^{L} \bold{f}_{   k\ell}(m_{\ell}).
\end{align*}
Here  $\bold{f}$ is the vector containing all   $\bold{f}{(m_1,m_2,\ldots,m_{L})}$ for any $(m_1,m_2,\ldots,m_{L}) \in [M]^{L}$ in a  specific order.
\end{definition}

In the following definitions, we  define the notion of the  identifiability of a mixture model.

\begin{definition}
For  any  $(F_{1:K};\bold{w})\in \overline{\Theta}_{K,L,M}$ and any permutation $\pi$ on $[K]$,  we define $(F^{\pi}_{1:K};\bold{w}^{\pi})\in \overline{\Theta}_{K,L,M}$ as follows. 
$$F^{\pi}_{1:K}:=(F_{\pi(1)},F_{\pi(2)},\ldots,F_{\pi(K)}),$$
and
$$\bold{w}^{\pi}:=(w_{\pi(1)},w_{\pi(2)},\ldots,w_{\pi(K)})^T.$$
\end{definition}
\begin{definition}
For  any $(F_{1:K};\bold{w}),(G_{1:K};\bold{z})\in \overline{\Theta}_{K,L,M}$, if there exists a permutation $\pi$ on $[K]$ such that $(F_{1:K};\bold{w})=(G_{1:K}^{\pi};\bold{z}^{\pi})$, then we write $(F_{1:K};\bold{w}) \approx (G_{1:K};\bold{z})$.
\end{definition}

\begin{definition}
A mixture model with  latent parameters $\theta = (F_{1:K};\bold{w})\in {\Theta}_{K,L,M}$ and mixture distribution $\bold{f}$ is said to be identifiable, iff  for any $(G_{1:K};\bold{z})\in \overline{\Theta}_{K,L,M}$ with mixture distribution $\bold{g}$, such that $\bold{f} = \bold{g}$ (they have the same mixture distributions),  we have  $(F_{1:K};\bold{w}) \approx (G_{1:K};\bold{z})$. 
In other words, a mixture model   is identifiable, iff its latent parameters  $\theta = (F_{1:K};\bold{w})$ can be identified from its mixture distribution  $\bold{f}$, up to a label swapping.
\end{definition}

For the identifiability of a mixture model it is essential that  
 the mixture components separate in some measures.
In this paper, first we define the notion of \textit{ strongly separable } variables as the measure of  difference among  mixture components. 
In our definition, the $\ell^{\text{th}}$ variable is said  to be   strongly separable,  if and only if  the  frequencies of it  are different for any distinct mixture components and also for any state space element.  
In what follows,  we mathematically define the notion of the strongly separable variables of a mixture model.
\begin{definition}
Consider a mixture model  with  latent parameters $\theta = (F_{1:K};\bold{w})\in \Theta_{K,L,M}$. 
In this mixture model,   the $\ell^{\text{th}}$ variable is said to be  strongly separable,  iff for any $m \in[M]$ and any distinct $k_1,k_2\in[K]$, we have ${f}_{ k \ell  m}\neq  {f}_{ k'\ell  m}$. In other words, the $\ell^{\text{th}}$ variable is strongly separable, iff  for any $k_1,k_2\in[K]$ we have  $\| \bold{f}_{k\ell} -\bold{f}_{k'\ell} \|_0 = M$. 
  Also the number of strongly separable variables of a mixture model  with latent parameters $\theta = (F_{1:K};\bold{w})$ is denoted by $\mathcal{L}_s{(F_{1:K};\bold{w})}$.
\end{definition}

\begin{remark}\normalfont
We note that  strongly separable variables are global.
In other words, in a strongly separable variable, any pair of  mixture components have different frequencies. 
\end{remark}

We note that the number of conditions that must be satisfied for the strong separability of a variable scales with
the state space size $M$.
For  studying  the effect of  this scaling on identifiability,  we  define \textit{ weakly separable } variables in this paper. 
The definition of the weakly separable variables is as follows.

\begin{definition}
Consider a mixture model  with  latent parameters $\theta = (F_{1:K};\bold{w})\in \Theta_{K,L,M}$. In this mixture model,   the $\ell^{\text{th}}$ variable is said to be  weakly separable,  iff   there is an  $m \in[M]$, such that for any distinct $k_1,k_2\in[K]$  we have ${f}_{ k \ell  m}\neq  {f}_{ k'\ell  m}$
\footnote{
  Note that in this case, for any distinct $k_1,k_2\in[K]$, we have 
  $\| \bold{f}_{k\ell} -\bold{f}_{k'\ell} \|_0 \ge 1$. 
  }. 
  Also the number of weakly separable variables of a mixture model  with latent parameters $\theta = (F_{1:K};\bold{w})$ is denoted by $\mathcal{L}_w{(F_{1:K};\bold{w})}$  . 
\end{definition}

\begin{remark}\normalfont
Observe that any strongly separable variable is also   weakly separable. 
This means that we have the inequality $\mathcal{L}_w{(F_{1:K};\bold{w})} \ge  \mathcal{L}_s{(F_{1:K};\bold{w})}$ for any latent parameters $(F_{1:K};\bold{w})\in \Theta_{K,L,M}$. 
Specially  for the case of binary state spaces $(M=2)$  the definitions of the weakly separable variables and the strongly separable variables are  same, resulting   $\mathcal{L}_w{(F_{1:K};\bold{w})} =  \mathcal{L}_s{(F_{1:K};\bold{w})}$ for any  $(F_{1:K};\bold{w})\in \Theta_{K,L,2}$. 
\end{remark}

%

In the next section, we analyze the problem of identifiability of the  mixture models and its relation to the number of separable variables (strong or weak). 
In particular,  we show that there is a sharp threshold on the number of separable variables (strongly or weak), that implies the identifiability of the corresponded mixture model.

\section{Main Results}
\label{Main Results}
The main result of this paper for the strongly separable variables is summarized in  the following theorem. 

\begin{theorem}\label{Theorem1} (Necessary and sufficient condition for the  identifiability based on the strongly separable variables) 
Any mixture model with latent variables $\theta = (F_{1:K};\bold{w}) \in \Theta_{K,L,M}$, such that $\mathcal{L}_s{(F_{1:K};\bold{w})} \ge 2K-1$,  is identifiable. Conversely, for any $\mathcal{L}_s{(F_{1:K};\bold{w})} \le 2K-2$, there is a mixture model with latent variables $\theta = (F_{1:K};\bold{w}) \in \Theta_{K,L,M}$, which has  $\mathcal{L}_s{(F_{1:K};\bold{w})}$ strongly separable variables  and is not identifiable.

\end{theorem}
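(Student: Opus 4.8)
\emph{The characteristic polynomial.} The plan is to encode each mixture model by a polynomial so that identifiability becomes a statement about polynomial identities. To $\theta=(F_{1:K};\bold{w})$ I attach, for each $k\in[K]$ and $\ell\in[L]$, the univariate polynomial $g_{k\ell}(y):=\sum_{m=1}^{M}f_{k\ell m}\,y^{m-1}$ (degree at most $M-1$, nonnegative coefficients, and $g_{k\ell}(1)=1$ since $\bold{f}_{k\ell}$ is a frequency vector), and set the characteristic polynomial
\[
\mathcal{Q}_{\theta}(y_1,\dots,y_L):=\sum_{k=1}^{K}w_k\prod_{\ell=1}^{L}g_{k\ell}(y_\ell).
\]
The coefficient of the monomial $\prod_{\ell=1}^{L}y_\ell^{m_\ell-1}$ in $\mathcal{Q}_\theta$ is exactly $\bold{f}(m_1,\dots,m_L)$, so two parameter vectors have the same mixture distribution if and only if $\mathcal{Q}_\theta\equiv\mathcal{Q}_{\theta'}$. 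Thus the sufficiency part reduces to the claim: whenever $\mathcal{Q}_\theta\equiv\mathcal{Q}_{\theta'}$ with $\theta\in\Theta_{K,L,M}$, $\mathcal{L}_s(\theta)\ge 2K-1$, and $\theta'=(G_{1:K};\bold{z})\in\overline{\Theta}_{K,L,M}$, one has $\theta\approx\theta'$. The point of this reformulation is that we may now invoke unique factorization in $\mathbb{R}[y_1,\dots,y_L]$, specialization of the variables, and root substitutions over $\mathbb{C}$.

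\emph{Induction on $K$.} I would prove the displayed implication by induction on the number of mixture components. For $K=1$ it is immediate: $\mathcal{Q}_\theta=\prod_\ell g_{1\ell}(y_\ell)$, and setting $y_{\ell'}=1$ for all $\ell'\ne\ell$ (legitimate because $g_{1\ell'}(1)=1$) recovers $g_{1\ell}$, hence each $\bold{f}_{1\ell}$. For the inductive step I first dispose of two easy reductions. Discard any component of $\theta'$ of zero weight and merge any two components of $\theta'$ with identical frequency matrices. Since a strongly separable variable separates every pair of components, $\theta$ has $K$ pairwise distinct components, so if a component of $\theta$ agrees — as a tuple of frequency vectors, and hence (evaluating at $y_\ell\equiv 1$) also in weight — with a component of $\theta'$, then cancelling these two terms and rescaling the remaining weights to sum to one yields a $(K-1)$-component model with at least $2K-1\ge 2(K-1)-1$ strongly separable variables, to which the induction hypothesis applies. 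What remains is the case in which no component of $\theta$ equals any component of $\theta'$; after the cleanup, moving everything to one side gives a relation
\[
\sum_{k=1}^{K}w_k\prod_{\ell=1}^{L}g_{k\ell}(y_\ell)\;-\;\sum_{j=1}^{K}z_j\prod_{\ell=1}^{L}\tilde g_{j\ell}(y_\ell)\;\equiv\;0
\]
with at most $2K$ product terms, all of nonzero coefficient and pairwise non-proportional factor tuples, in which the $2K-1$ coordinates provided by $\mathcal{L}_s(\theta)\ge 2K-1$ are \emph{separating} for the first group: $g_{1\ell},\dots,g_{K\ell}$ have coefficient vectors differing in every entry, hence are pairwise non-proportional.

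\emph{The core lemma and the main obstacle.} The heart of the matter is a lemma saying such a relation is impossible: a vanishing combination of $N$ products of nonzero univariate polynomials with nonzero coefficients and pairwise non-proportional factor tuples cannot exist once there are about $N-1$ separating coordinates (for one of the two groups of terms). I expect this to be the hard step — a hands-on analogue, for products of polynomials, of the uniqueness arguments behind Kruskal's theorem — and it is precisely here that the threshold $2K-1$ is consumed. My plan for it is again recursive: peel one product term off the identity at the cost of two separating coordinates, e.g. by specializing one separating variable to a (possibly complex) root of one of the $g_{k\ell}$ and using a second separating coordinate to ensure the surviving coefficients stay nonzero, thereby passing to a vanishing combination of two fewer terms with two fewer separating coordinates and closing the recursion. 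The delicate points are: choosing an inductive statement robust enough to survive the peeling (allowing complex/signed coefficients and only one-sided control of non-proportionality, since nothing is known a priori about the $\tilde g_{j\ell}$); ruling out accidental extra cancellation under the specialization; and a concluding bookkeeping step in which, once the $2K-1$ separating variables — and thus the permutation $\pi$ and the weights — are matched, the remaining frequency vectors $\bold{f}_{k\ell}$ with $\ell>2K-1$ are read off from the full mixture distribution.

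\emph{The converse.} For non-identifiability I would give an explicit family. If, within every component, the $L$ variables are "identically distributed" — meaning $g_{k\ell}=r+\beta_k s$ for fixed polynomials $r,s$ with $r(1)=1$, $s(1)=0$ — then $\mathcal{Q}_\theta=\sum_{S\subseteq[L]}\big(\sum_k w_k\beta_k^{|S|}\big)\prod_{\ell\in S}s(y_\ell)\prod_{\ell\notin S}r(y_\ell)$, so $\mathcal{Q}_\theta$ depends on the parameters only through the moments $\sum_k w_k\beta_k^{d}$ for $d=0,\dots,L$. Because a discrete measure with $K$ atoms is \emph{not} determined by its moments of orders $0,\dots,2K-2$, a short Vandermonde computation (take $2K$ distinct points, solve the homogeneous $(2K-1)\times 2K$ moment system, split the alternating-sign solution into two atomic probability measures with disjoint supports) produces two non-equivalent $K$-atom measures agreeing through order $2K-2$. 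Taking $L=2K-2$, with $r$ having all coefficients positive and $s$ having $s(1)=0$ and all coefficients nonzero (so every variable becomes strongly separable, and the $g_{k\ell}$ are honest frequency vectors for $\beta_k$ small), this yields a non-identifiable model with exactly $2K-2$ strongly separable variables. To realize an arbitrary value $t\le 2K-2$ I would run the same construction with the appropriate number of "ambiguous" atoms, adjoin spectator components identical in both parameter vectors (chosen so that the separability count is unchanged), and pad with inert variables that are identical across all components; the degenerate small cases (in particular $t=0$) are handled directly, e.g. by the two-component model whose components are equal but whose mixing weights are not a permutation of one another.
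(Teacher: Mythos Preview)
Your characteristic polynomial $\mathcal{Q}_\theta$ is a clean reformulation --- its coefficients literally are the mixture distribution --- and your converse is correct and somewhat tidier than the paper's: the observation that $\mathcal{Q}_\theta$ depends only on the moments $\sum_k w_k\beta_k^d$ reduces the construction to the classical fact that a $K$-atom probability measure is not determined by its first $2K-1$ moments, whereas the paper arrives at essentially the same family via a specific binomial-coefficient choice of weights and an ad hoc differentiation identity.

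The sufficiency argument, however, has a genuine gap at precisely the point you flag as delicate. For $M=2$ each $g_{k\ell}(y)=f_{k\ell 1}+(1-f_{k\ell 1})y$ is linear, and since $t\mapsto -t/(1-t)$ is injective, strong separability (all $f_{k\ell 1}$ distinct) forces the roots to be distinct; your peeling then goes through. But for $M\ge 3$ strong separability does \emph{not} preclude shared roots. Concretely, with $M=3$ take $g_1(y)=\tfrac16+\tfrac12 y+\tfrac13 y^2$ and $g_2(y)=\tfrac14+\tfrac{7}{12}y+\tfrac16 y^2$: both are valid frequency polynomials, all three coefficients differ, yet both vanish at $y=-\tfrac12$. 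So specialising a separating variable to a root of $g_{K\ell}$ can kill several $F$-terms simultaneously, and nothing in your sketch explains how the ``second separating coordinate'' prevents this. A related slip: in your preliminary reduction, a shared frequency tuple between a component of $\theta$ and one of $\theta'$ does not by itself force the weights to agree --- evaluating at all $y_\ell=1$ only yields $\sum_k w_k=\sum_j z_j=1$.

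The paper sidesteps both problems by first reducing to $M=2$. Its characteristic polynomial has \emph{linear} factors $(x_\ell-f_{k\ell 1})$, so substituting $x_\ell=f_{K\ell 1}$ kills exactly one term with all survivors nonzero by strong separability; the induction (the paper's Theorem~3) then runs via a two-case analysis on whether the $G$-side also drops a term. General $M$ is recovered afterwards by projecting each variable to the binary indicator of a single state $m$ and re-applying the $M=2$ result. If you want to keep your all-$M$-at-once formulation, the right repair is to replace root substitution by a linear functional on coefficient space: choose $c\in\mathbb{R}^M$ with $\sum_m c_m f_{K\ell m}=0$ but $\sum_m c_m f_{k\ell m}\ne 0$ for $k\ne K$ (such $c$ exists because the $\bold{f}_{k\ell}$ are pairwise distinct probability vectors), and apply $g\mapsto\sum_m c_m[y^{m-1}]g$ in coordinate $\ell$. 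This still sends products to products and kills exactly the $K$-th term; at that point your induction becomes essentially a repackaging of the paper's.
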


\begin{remark}  \normalfont  
 The theorem states that if $\mathcal{L}_s{(F_{1:K};\bold{w})}\ge 2K-1$ then the  identifiability  of  the parameters from the mixture distribution is guaranteed.  On the other hand,  if $\mathcal{L}_s{(F_{1:K};\bold{w})}  \le 2K-2$ there is no guarantee that the problem is identifiable. Hence, in the  worst-case analysis, the identifiability is possible if and only if we have at least $2K-1$ strongly separable variables. 
\end{remark}

\begin{remark}  \normalfont  
Our threshold depends only on the number of mixture components $K$, and it is independent of $M$.  Naturally,  we expect that our threshold varies with the size of the state space,  similar to the generic identifiability result, which is $2 \lceil   \log_{M}(K)\rceil + 1$. 
Also, the threshold is $\Theta(K)$, while in the generic identifiability, the threshold  is $\Theta(\log_M (K))$.  This means that by relying on strongly separable variables  order-wise more variables are required in comparison with generic identifiability.
\end{remark}

\begin{remark}\normalfont
For the proof of the theorem, first we introduce a multi-variable polynomial which is made up by the parameters of the problem. 
We show that the identifiability of a mixture model   follows by the identifiability 
of its characteristic polynomial in
 the class of polynomials. 
Then,  by exploiting  the properties of the  polynomials, we prove the sufficiency part of the theorem. For the necessary  part, we introduce a family of non-identifiable mixture models that shows the necessity of the condition in the worst-case regime.
\end{remark}

The result  for  the   weakly separable variables is also provided in the next theorem.

\begin{theorem}  (Sufficient condition for the  identifiability based on the weakly separable variables) 
Any mixture model with latent variables $\theta = (F_{1:K};\bold{w}) \in \Theta_{K,L,M}$  such that $\mathcal{L}_w{(F_{1:K};\bold{w})} \ge 2K$  is identifiable.
\end{theorem}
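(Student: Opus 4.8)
The plan is to reduce Theorem~2 to Theorem~\ref{Theorem1} by ``binarizing'' the weakly separable variables, and then to recover the remaining coordinates through a linear‑independence argument on the characteristic polynomial. I would first recall the reformulation used for Theorem~\ref{Theorem1}: two parameter tuples induce the same mixture distribution iff their characteristic polynomials coincide, where for $\theta=(F_{1:K};\bold{w})$ the characteristic polynomial is $P_{\theta}(\bold{x}_1,\ldots,\bold{x}_L)=\sum_{k}w_k\prod_{\ell}\langle\bold{f}_{k\ell},\bold{x}_\ell\rangle$, whose coefficients are exactly the entries of $\bold{f}$. Let $(G_{1:K};\bold{z})\in\overline{\Theta}_{K,L,M}$ have the same mixture distribution, and let $W$ be the set of weakly separable variables, $|W|\ge 2K$. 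For each $\ell\in W$ fix a state $m_\ell$ with $f_{k\ell m_\ell}\neq f_{k'\ell m_\ell}$ for all $k\neq k'$, and apply to both models the (distribution‑preserving, linear) collapse map on variable $\ell$ that merges all states other than $m_\ell$. For $\ell\in W$ this produces binary frequency vectors $(f_{k\ell m_\ell},1-f_{k\ell m_\ell})\in\overline{\Delta}_2$, and by weak separability the collapsed $F$-model has all these variables \emph{strongly} separable. Keeping only the variables in $W$, we obtain a model in $\Theta_{K,L',2}$ (with $L'=|W|\ge 2K\ge 2K-1$ strongly separable variables) whose mixture distribution equals the corresponding collapse of $(G_{1:K};\bold{z})$.

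Next I would invoke Theorem~\ref{Theorem1}: the collapsed $F$-model is identifiable, so there is a permutation $\pi$ after which the collapsed $G$-model equals the collapsed $F$-model. Relabeling the components of $(G_{1:K};\bold{z})$ by $\pi$ (harmless, since we are proving identifiability only up to $\approx$), we get $\bold{z}=\bold{w}$ and $g_{k\ell m_\ell}=f_{k\ell m_\ell}=:\lambda_{k\ell}$ for every $k\in[K]$ and every $\ell\in W$. It remains to show $g_{k\ell_0 m_0}=f_{k\ell_0 m_0}$ for an arbitrary variable $\ell_0$ and state $m_0$. In the identity $P_{(F;\bold{w})}\equiv P_{(G;\bold{w})}$, substitute $\bold{x}_{\ell_0}=e_{m_0}$; for each $\ell\in W\setminus\{\ell_0\}$ substitute the vector equal to $t_\ell$ in coordinate $m_\ell$ and $1$ elsewhere (its factor becomes $1+\lambda_{k\ell}(t_\ell-1)$, identical on both sides); and for every remaining variable substitute the all‑ones vector (factor $1$). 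This yields
\begin{align*}
\sum_{k=1}^{K}w_k\bigl(f_{k\ell_0 m_0}-g_{k\ell_0 m_0}\bigr)\prod_{\ell\in W\setminus\{\ell_0\}}\bigl(1+\lambda_{k\ell}(t_\ell-1)\bigr)\equiv 0 ,
\end{align*}
with $r:=|W\setminus\{\ell_0\}|\ge 2K-1$.

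The crux is then the following lemma, which I would prove by induction on $K$: if $r\ge 2K-1$ and for each $\ell$ the scalars $\lambda_{1\ell},\ldots,\lambda_{K\ell}$ are pairwise distinct, then the polynomials $\prod_{\ell=1}^{r}(1+\lambda_{k\ell}u_\ell)$, $k\in[K]$, are linearly independent. In the inductive step one extracts from a putative dependence the coefficients of $u_1^{0}$ and of $u_1^{1}$ (two polynomial identities in $u_2,\ldots,u_r$), eliminates the $k=1$ term using $\lambda_{k1}\neq\lambda_{11}$, and applies the induction hypothesis to the $K-1$ products in $u_2,\ldots,u_r$ (whose number $r-1\ge 2K-2\ge 2(K-1)-1$ still exceeds the threshold), forcing all coefficients to vanish; the base case $K=1$ is just that a product of nonzero linear polynomials is nonzero. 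Applying the lemma to the displayed identity gives $w_k(f_{k\ell_0 m_0}-g_{k\ell_0 m_0})=0$, hence $f_{k\ell_0 m_0}=g_{k\ell_0 m_0}$ since $w_k>0$; as $\ell_0,m_0$ are arbitrary, $F_{1:K}=G_{1:K}$ and the theorem follows. I expect the lemma to be the main obstacle: the induction must survive the degeneracies permitted by mere weak separability — distinct variables sharing the same separating coordinates ($\lambda_{k\ell}=\lambda_{k\ell'}$), or some $\lambda_{k\ell}=0$ — and it is precisely the $u_1^{0},u_1^{1}$ elimination together with the one‑variable slack in the count $r\ge 2K-1$ (equivalently, one spare weakly separable variable beyond the $2K-1$ needed for Theorem~\ref{Theorem1}) that absorb them; an alternative, more self‑contained route would be to mirror the inductive characteristic‑polynomial argument of Theorem~\ref{Theorem1} directly for the weakly separable setting.
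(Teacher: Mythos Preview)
Your argument is correct. The first stage---collapsing each weakly separable variable to a binary variable at its separating state and applying Theorem~\ref{Theorem1} to fix the permutation and the weights---is exactly what the paper does (the paper binarizes all $L$ variables at once rather than only those in $W$, but this is immaterial since the extra variables contribute nothing to the count of strongly separable ones).

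The second stage, recovering the remaining coordinates $f_{k\ell m}$, is where you diverge. The paper proceeds by creating, for each target coordinate, a \emph{second} auxiliary binary model in which that coordinate plays the role of ``state~1,'' re-applies Theorem~\ref{Theorem1}, and then argues that the new permutation must coincide with the old one via a common strongly separable variable; this is where the extra weakly separable variable (the $2K$th) is spent, since one of the original $2K$ may lose separability under the new binarization. Your route is more direct: you plug carefully chosen vectors into the multilinear generating polynomial $P_\theta$ so that all factors except the target one become equal on both sides, and you finish with the linear-independence lemma for the products $\prod_\ell(1+\lambda_{k\ell}u_\ell)$. The lemma and its inductive proof are fine; in fact the elimination step you describe shows it already holds once $r\ge K-1$, so your bound $r\ge 2K-1$ is comfortably slack. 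What your approach buys is a single invocation of Theorem~\ref{Theorem1} followed by an elementary, self-contained argument, avoiding the permutation-matching bookkeeping the paper needs after each re-binarization; what the paper's approach buys is uniformity---it never introduces a new lemma, only reuses Theorem~\ref{Theorem1}.
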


\begin{remark}  \normalfont  
 The theorem states that although the weakly separable variables satisfy order-wise less conditions, but to use them, it suffices to have just  one extra variable,  in comparison with the strong separability measure. 
 \end{remark}

 \begin{remark}  \normalfont  
It is easy to see that in a more general model,  if the observed  variables have state spaces with (possibly) different sizes  denoted by $\big \{ M_\ell \big \}_{\ell \in [L]}$, then the notion of weakly separable variables can be defined for them similarly and also the sufficiency  of  $2K$ weakly separable variables for the identifiability of them holds.  For this matter, it just suffices to set $M = \max_{\ell \in [L]} M_{\ell}$ and then consider each observed variable as an instance with state apace of cardinality $M$, by setting $M-M_{\ell}$ frequencies to be zero in the $\ell^{\text{th}}$ variable.
 \end{remark}

The proofs of the theorems are available in the next two sections.

\section{Proof of Theorem 1}
 \label{Proof of Theorem 1}
The proof consists of three steps. First we  prove the sufficiency part of theorem for  the binary state space,  i.e.,  the case  $M=2$.   
Then we extend the  proof  for non-binary state spaces. The necessity part of theorem is also proved via  providing  a class of non-identifiable mixture models. This three steps of the proof are available in the next three subsections.

\subsection{Proof of the sufficiency part of Theorem 1 for $M=2$}
In this part, we use the notation $f_{k\ell}$ instead of $f_{k\ell1}$. Note that we  have $f_{k\ell 1}+ f_{k\ell 2} = 1$ in the binary case. Also, we use  ${\Theta}_{K,L}$ and $\overline{\Theta}_{K,L}$ instead of ${\Theta}_{K,L,2}$ and $\overline{\Theta}_{K,L,2}$, respectively.
First we need to define the characteristic polynomial of the latent parameters.

\begin{definition}
The characteristic polynomial of latent parameters $(F_{1:K};\bold{w})\in \overline{\Theta}_{K,L}$  is an $L$-variable polynomial that is defined as follows.
\begin{align*}
C_{(F_{1:K};\bold{w})}(x_1,x_2,\ldots,x_L):= \sum_{k=1}^K w_k\prod_{\ell=1}^L(x_{\ell}-f_{k\ell }).
\end{align*}
We  also denote the characteristic polynomials by $C_{(F_{1:K};\bold{w})}(x_{1:L})$ in a brief way.

\end{definition}
 The characteristic polynomial of latent parameters $(F_{1:K};\bold{w})\in {\Theta}_{K,L}$ has an important role in our proofs. In particular, in the next lemma, we prove that the identifiability of the characteristic polynomial of  a mixture model  implies  the the identifiability of the corresponding mixture model.

 \begin{lemma}\label{lemma1}
For any $(F_{1:K};\bold{w})\in {\Theta}_{K,L}$,  the  following propositions are equivalent. 
 \begin{enumerate} [(i)]
\item A mixture model with the latent parameters $(F_{1:K};\bold{w})\in {\Theta}_{K,L}$
is identifiable.
\item   
For any  $(G_{1:K};\bold{z})\in \overline{\Theta}_{K,L}$ satisfying the polynomial identity $C_{(G_{1:K},\bold{z})}(x_{1:L}) \equiv C_{(F_{1:K};\bold{w})}(x_{1:L})$,  we have  $(F_{1:K};\bold{w})\approx (G_{1:K};\bold{z})$.
\end{enumerate}

 \end{lemma}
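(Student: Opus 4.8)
The plan is to reduce the lemma to a single linear-algebra observation: for $M=2$, the mixture distribution $\bold{f}$ and the coefficient vector of the characteristic polynomial $C_{(F_{1:K};\bold{w})}$ are related by a fixed invertible linear map that does not depend on the latent parameters. Once this is established, the equivalence of (i) and (ii) is purely formal. Indeed, by the definition of identifiability and of the polynomial identity $\equiv$, statement (i) says that every $(G_{1:K};\bold{z})\in\overline{\Theta}_{K,L}$ with $\bold{g}=\bold{f}$ satisfies $(F_{1:K};\bold{w})\approx(G_{1:K};\bold{z})$, while statement (ii) says the same with the hypothesis $\bold{g}=\bold{f}$ replaced by $C_{(G_{1:K};\bold{z})}\equiv C_{(F_{1:K};\bold{w})}$; since $\bold{g}=\bold{f}$ and $C_{(G_{1:K};\bold{z})}\equiv C_{(F_{1:K};\bold{w})}$ are equivalent for any two parameter tuples, (i) and (ii) become identical.

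To prove the linear equivalence I would exploit the tensor structure of both objects. Writing $f_{k\ell}:=f_{k\ell1}$ and $\bold{v}_{k\ell}:=(f_{k\ell},\,1-f_{k\ell})^T\in\mathbb{R}^2$, the definition of the mixture distribution gives $\bold{f}=\sum_{k=1}^K w_k\,(\bold{v}_{k1}\otimes\cdots\otimes\bold{v}_{kL})$, where the $\ell$-th tensor slot is indexed by the state of the $\ell$-th variable. Similarly, expanding $\prod_{\ell}(x_\ell-f_{k\ell})$ as a multilinear polynomial in $x_1,\dots,x_L$ and collecting the coefficient of each monomial $\prod_{\ell\in S}x_\ell$ shows that the coefficient array of $C_{(F_{1:K};\bold{w})}$ equals $\sum_{k=1}^K w_k\,(\bold{u}_{k1}\otimes\cdots\otimes\bold{u}_{kL})$ with $\bold{u}_{k\ell}:=(1,\,-f_{k\ell})^T$, reading the $\ell$-th slot as (coefficient of $x_\ell^{1}$, coefficient of $x_\ell^{0}$). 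Since $\bold{u}_{k\ell}=A\bold{v}_{k\ell}$ for the fixed matrix $A=\left(\begin{smallmatrix}1&1\\-1&0\end{smallmatrix}\right)$, which is invertible ($\det A=1$), the two arrays are related by $(\text{coeff.\ array of }C_{(F_{1:K};\bold{w})})=(A\otimes\cdots\otimes A)\,\bold{f}$, a fixed invertible linear map. In particular $\bold{f}=\bold{g}$ if and only if $C_{(F_{1:K};\bold{w})}\equiv C_{(G_{1:K};\bold{z})}$, for all $(F_{1:K};\bold{w}),(G_{1:K};\bold{z})\in\overline{\Theta}_{K,L}$; note this holds on the whole closure $\overline{\Theta}_{K,L}$, since everything above is polynomial in the entries of the parameters and never used positivity of the weights.

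Combining the two paragraphs proves the lemma. There is no real obstacle here; the one thing to get right is the index bookkeeping when identifying the index set of $\bold{f}$ with the set of monomials $\{\prod_{\ell\in S}x_\ell:\,S\subseteq[L]\}$ of the characteristic polynomial, and checking that the per-coordinate change of basis is exactly $A$. An alternative, equivalent but perhaps more transparent route is to pass through the ``moments'' $\mu_S:=\sum_k w_k\prod_{\ell\in S}f_{k\ell}$: the entries of $\bold{f}$ determine and are determined by $\{\mu_S\}_{S\subseteq[L]}$ via marginalization and inclusion--exclusion, and the coefficient of $\prod_{\ell\in S}x_\ell$ in $C_{(F_{1:K};\bold{w})}$ equals $(-1)^{|[L]\setminus S|}\mu_{[L]\setminus S}$, so both objects encode the same data.
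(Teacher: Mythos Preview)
Your proposal is correct. The key observation in both your argument and the paper's is that, for $M=2$, the mixture distribution $\bold{f}$ and the coefficient array of $C_{(F_{1:K};\bold{w})}$ are related by a fixed invertible linear map independent of the parameters, so $\bold{f}=\bold{g}$ and $C_{(F_{1:K};\bold{w})}\equiv C_{(G_{1:K};\bold{z})}$ are equivalent conditions and (i)$\Leftrightarrow$(ii) follows formally.

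Where you differ from the paper is in how this bijection is exhibited. The paper works through the ``moments'' $\mu_I=\sum_k w_k\prod_{\ell\in I}f_{k\ell}$ explicitly: it expands $C_{(F_{1:K};\bold{w})}$ to read off that the coefficient of $\prod_{\ell\notin I}x_\ell$ is $(-1)^{|I|}\mu_I$, and then shows separately (by marginalization in one direction and inclusion--exclusion in the other) that the family $\{\mu_I\}_{I\subseteq[L]}$ determines and is determined by $\bold{f}$. Your main route packages the same map structurally: you observe the rank-one tensor form of each summand on both sides and note that $\bold{u}_{k\ell}=A\bold{v}_{k\ell}$ for the fixed invertible $A=\left(\begin{smallmatrix}1&1\\-1&0\end{smallmatrix}\right)$, whence the whole map is $A^{\otimes L}$ with determinant $1$. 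This is shorter and makes invertibility immediate without doing inclusion--exclusion by hand; the paper's version is more elementary and self-contained but requires checking both directions separately. Your ``alternative route'' via $\mu_S$ is precisely the paper's proof.
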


 \begin{proof}
 See appendix A.
 \end{proof}
 
 \begin{remark} \normalfont
Lemma \ref{lemma1} shows that the identifiability of mixture models of products of Bernoulli measures is equivalent to the identifiability of a class of multi-variable polynomials. This connection makes it possible to prove an identifiability result in the class of multi-variable polynomials and use it to prove the identifiability of mixture models.
 \end{remark}

Now,  we state the following theorem which concludes the proof of the sufficiency part of Theorem \ref{Theorem1} for binary case.

\begin{theorem}\label{Theorem3}
Let  $(F_{1:K};\bold{w})\in \Theta_{K,L}$ with  $\mathcal{L}_s(F_{1:K};\bold{w})\ge 2K-1$.  
Then, for any  $(G_{1:K};\bold{z})\in \overline{\Theta}_{K,L}$ the identity  $C_{(G_{1:K};\bold{z})}(x_{1:L}) \equiv C_{(F_{1:K};\bold{w})}(x_{1:L})$  implies $(F_{1:K};\bold{w}) \approx (G_{1:K};\bold{z})$.
\end{theorem}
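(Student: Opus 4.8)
By Lemma~\ref{lemma1} the whole question is about polynomials, so the plan is to prove the identity statement by induction on $K$. It is convenient to induct on a mildly stronger version in which the weights are arbitrary nonzero reals with a common nonzero sum, the comparison model $(G_{1:K};\bold{z})$ is allowed any number $m\le K$ of distinct components, and only the identity $C_{(G_{1:K};\bold{z})}\equiv C_{(F_{1:K};\bold{w})}$ is assumed; positivity of the weights is never used in the algebra, and this generalization is exactly what the reduction step below produces. The base case $K=1$ is immediate: $C_{(F_{1:K};\bold{w})}=w_1\prod_{\ell=1}^{L}(x_\ell-f_{1\ell})$ is a scalar times a monic multilinear polynomial, so matching the leading coefficient and each coefficient of $\prod_{\ell\neq j}x_\ell$ forces $w_1=z_1$ and $f_{1\ell}=g_{1\ell}$ for all $\ell$; no separability is needed here.

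For the inductive step, let variables $1,\dots,2K-1$ be strongly separable for $(F_{1:K};\bold{w})$. The idea is to kill one component by substituting a numerical value into a strongly separable variable. The $K$ numbers $f_{11},\dots,f_{K1}$ are distinct, so after a harmless relabeling I may assume that substituting $x_1=f_{11}$ into $C_{(F_{1:K};\bold{w})}\equiv C_{(G_{1:K};\bold{z})}$ both annihilates the first $F$-term and leaves the $K-1$ surviving $F$-weights $w_k(f_{11}-f_{k1})$ nonzero with nonzero sum (their sum is a nonconstant affine function of $f_{11}$, so at most one admissible value spoils it). This yields
\[
\sum_{k=2}^{K}w_k(f_{11}-f_{k1})\prod_{\ell=2}^{L}(x_\ell-f_{k\ell})\;\equiv\;\sum_{j=1}^{m}z_j(f_{11}-g_{j1})\prod_{\ell=2}^{L}(x_\ell-g_{j\ell}).
\]
The left side is a characteristic polynomial of a $(K-1)$-component model whose points, restricted to coordinates $2,\dots,L$, are pairwise distinct in the $2(K-1)$ coordinates $2,\dots,2K-1$ — one coordinate past the $2(K-1)-1$ threshold the induction hypothesis needs. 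After deleting the vanishing right-hand terms and merging any coincident points, the right side is a characteristic polynomial of some $m'$ distinct components; provided $m'\le K-1$, the induction hypothesis gives a permutation matching the two $(K-1)$-component configurations on coordinates $2,\dots,L$.

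It then remains to recover the first coordinates and the weights. Feeding the matched data back into the full identity $C_{(F_{1:K};\bold{w})}\equiv C_{(G_{1:K};\bold{z})}$, the still-unknown quantities satisfy a linear system whose coefficient matrix is nonsingular because the rank-one multilinear polynomials $\prod_{\ell}(x_\ell-f_{k\ell})$ are linearly independent; that independence follows in turn from the spread of their linear factors over the $\ge 2K-1$ strongly separable coordinates. Solving gives $w_k=z_{j}$ and $f_{k1}=g_{j1}$ for matched indices, hence $(F_{1:K};\bold{w})\approx(G_{1:K};\bold{z})$, closing the induction.

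The step I expect to be the main obstacle is establishing $m'\le K-1$: the substitution must not leave the right side with $K$ genuinely distinct components when the left side has only $K-1$. Equivalently, a $(K-1)$-component characteristic polynomial that is strongly separable in $2(K-1)$ coordinates cannot be written as a sum of $K$ distinct-point rank-one multilinear polynomials with all coefficients nonzero; intuitively, along the $2K-1$ strongly separable coordinates the rank-one factors on the two sides realize at least $K$ distinct linear forms each, so they are too spread out to support an everywhere-nonzero linear dependence of that shape. This is a Kruskal-type uniqueness/permutation lemma specialized to rank-one forms whose factors lie in $\mathbb{R}^2$, and it is precisely here that the sharp count $2K-1$ (rather than $2K-2$) is used; I would expect most of the work — and whatever further induction or explicit-polynomial bookkeeping is needed — to be concentrated in this lemma, which also underpins the linear-independence claim invoked in the weight-recovery step.
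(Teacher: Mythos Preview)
Your induction-plus-substitution scheme is the paper's, and you have correctly located the crux: after the substitution you have $K-1$ terms on the $F$-side but possibly still $K$ nonzero terms on the $G$-side, so the induction hypothesis does not apply directly.

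The paper, however, does not prove a separate Kruskal-type lemma here. It disposes of the bad case by a \emph{second} substitution, this time with a $G$-value rather than an $F$-value. Concretely: if for every strongly separable coordinate $\ell$ and every pair $k,k'$ one has $f_{k\ell}\neq g_{k'\ell}$ and all $z_{k'}\neq 0$ (this is exactly the bad case, since otherwise some choice of $f$-substitution already drops a $G$-term and you are in the good case), then pick another strongly separable coordinate, say $\ell=2$, and substitute $x_2=g_{12}$ into the already-reduced identity. This kills one term on the $G$-side but, by the standing bad-case hypothesis, kills nothing on the $F$-side. Now both sides carry at most $K-1$ terms, and $2K-3=2(K-1)-1$ strongly separable coordinates remain among $x_3,\dots,x_L$, so the induction hypothesis yields a bijection $\phi$ with $f_{k\ell}=g_{\phi(k)\ell}$ for all those $\ell$. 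In particular $f_{k3}=g_{\phi(k)3}$ at the separable coordinate $\ell=3$, contradicting the bad-case hypothesis. Hence the bad case is vacuous and your obstacle dissolves without any external rank lemma.

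Two smaller points. First, the paper's strengthened induction hypothesis is only $\|\mathbf{w}\|_0=K$; your extra ``common nonzero sum'' requirement is never used and merely creates the spurious worry about whether the surviving weights sum to zero. Second, the paper recovers the killed component and the weights not through a linear system but through a further simultaneous substitution in the \emph{original} identity: set $x_{\ell_k}=f_{k\ell_k}$ for $k=1,\dots,K-1$ at $K-1$ of the strongly separable coordinates to isolate the single surviving term on each side, which forces $f_{K\ell}=g_{K\ell}$ for every $\ell\notin\{\ell_1,\dots,\ell_{K-1}\}$; repeating with the \emph{other} $K-1$ strongly separable coordinates covers those as well, after which one subtracts the matched $K$-th term from both sides and invokes the induction hypothesis once more to pin down the remaining $K-1$ components and all the weights. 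This double pass is precisely where the tight count $2(K-1)+1=2K-1$ is consumed.
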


 \begin{remark} \normalfont
 Observe that the sufficiency part of Theorem \ref{Theorem1} for binary state spaces follows by  Theorem \ref{Theorem3} and Lemma \ref{lemma1}. Also, note that Theorem \ref{Theorem3} establishes an identifiability argument for a class of multi-variable polynomials, named by characteristic polynomials.
 \end{remark}

\begin{proof}

To prove Theorem \ref{Theorem3}, we establish an stronger argument. We relax the condition 
$\bold{w} \in \Delta_K$ in the definition of $\Theta_{K,L}$ to  $\| \bold{w} \|_0 = K$  and form a new set of latent parameters, denoted by $\Theta^*_{K,L}$. Note that we have $\Theta_{K,L}\subseteq  \Theta^*_{K,L}$. Also 
$ \overline{\Theta^*}_{K,L}$ denotes the closure of $ \Theta^*_{K,L}$. 
Now we prove the statement of the theorem, for the cases  that  $(F_{1:K};\bold{w})\in \Theta^*_{K,L}$ and $(G_{1:K};\bold{z})\in \overline{\Theta^*}_{K,L}$, which is  stronger than  the theorem\footnote{
Note that all of the prior  definitions for $(F_{1:K};\bold{w})\in \Theta_{K,L}$ naturally extend to the elements of $\Theta^*_{K,L}$.   
}. The reason that we use this modification is that this allows us to establish an  inductive proof.

The proof is based on an    induction on $K$. The case  $K=1$ is trivial. Assume that the theorem is proved for any  $K < \tilde{K}$. We will prove   the theorem   for $K=\tilde{K}$. Assume that for some $(F_{1:\tilde{K}};\bold{w}) \in {\Theta^*}_{\tilde{K},L}$ and $ (G_{1:\tilde{K}};\bold{z})\in \overline{\Theta^*}_{\tilde{K},L}$, we have the identity $C_{(F_{1:\tilde{K}};\bold{w})}(x_{1:L}) \equiv C_{(G_{1:\tilde{K}};\bold{z})}(x_{1:L})$ and also $\mathcal{L}_s{(F_{1:\tilde{K}};\bold{w})}\ge 2\tilde{K}-1$. We will show that
 $(F_{1:\tilde{K}};\bold{w}) \approx (G_{1:\tilde{K}};\bold{z})$.  Note that we have
\begin{align}
\sum_{k=1}^{\tilde{K}}w_k  \prod_{\ell =1}^L (x_{\ell}-f_{k\ell }) \equiv  
\sum_{k=1}^{\tilde{K}}z_k  \prod_{\ell =1}^L (x_{\ell}-g_{k\ell }) . \label{07}
\end{align}
Without loss of generality, assume that the variable $\ell =L $ is strongly separable in $(F_{1:\tilde{K}};\bold{w})$. Letting   $x_L = f_{\tilde{K}L }$ in (\ref{07}) results
\begin{align}
\sum_{k=1}^{\tilde{K}-1} w_k  (f_{\tilde{K}L}&-f_{kL }) \prod _{\ell =1}^{L-1}(x_{\ell}-f_{k\ell }) \equiv  
 \sum_{k=1}^{\tilde{K}}z_k (f_{\tilde{K}L }-g_{kL })\prod _{\ell =1}^{L-1}(x_{\ell}-g_{k\ell }). \label{08}
\end{align}
Note that the term $k=\tilde{K}$ in LHS of (\ref{07}) becomes zero by choosing $x_L = f_{\tilde{K}L }$. 
Also notice that   for any $k\in [\tilde{K}-1], f_{\tilde{K}L }\neq f_{kL }$, because the variable $\ell =L$ is  strongly separable. 
Now two cases may happen. 

\textit{Case one. } 
First assume that the  RHS of  the summation in (\ref{08}) has less than $\tilde{K}$ non-zero  terms\footnote{
Note that the LHS of (\ref{08}) contains $\tilde{K}-1$ terms, which means that the number of distinct non-zero polynomials  in the summation is $\tilde{K}-1$. This is due to the strong separability of the the variable  $\ell = 1$. 
}, i.e.,  there is a $k \in [\tilde{K}]$, such that $z_k(f_{\tilde{K}L } - g_{kL})=0$. 
Without loss of generality, assume  that $z_{\tilde{K}}(f_{\tilde{K}L} - g_{\tilde{K}L})=0$. 
In this case, we can use the induction hypothesis on the identity in (\ref{08}). 
More precisely,  since  we have set $z_{\tilde{K}}(f_{\tilde{K}L} - g_{\tilde{K}L})=0$, the identity (\ref{08}) can be written as
\begin{align}
\sum_{k=1}^{\tilde{K}-1} \underbrace{
w_k(f_{\tilde{K}L}-f_{kL })}_{w'_k} \prod _{\ell =1}^{L-1}(x_{\ell}-f_{k\ell }) \equiv  
 \sum_{k=1}^{\tilde{K}-1}   \underbrace{
 z_k (f_{\tilde{K}L }-g_{kL })}_{z'_k}\prod _{\ell =1}^{L-1}(x_{\ell}-g_{k\ell })\label{1234}.
\end{align}
We note that $w'_k \neq 0$, since the variable $\ell =1$ has been assumed to be strongly separable and $w_k \neq 0$. Now we consider 
two sets of latent parameters corresponded to the two sides of (\ref{1234}).
Observe that the number of strongly separable variables in the corresponding problem of the LHS of (\ref{1234}) is  at least $2\tilde{K}-2$, which is greater than $2(\tilde{K}-1)-1$. This shows that we can use the induction hypothesis in this case.

Hence,  by using the induction hypothesis, we conclude that  there is a permutation $\pi_1$ on $[\tilde{K}-1]$,  such that for any $k\in [\tilde{K}-1]$ and  $\ell \in [L-1]$, we have $f_{k\ell }=g_{\pi_1(k)\ell }$.  
Now  let  $1 \le\ell_1<\ell_2<\ldots<\ell_{2\tilde{K}-2}\le L-1$ be some  strongly  separable variables of $(F_{1:\tilde{K}};\bold{w})$. The existence of them is guaranteed by the assumption of the induction. Now if we  let   $x_{\ell_k}=f_{k \ell_k  }$ for any $k \in [\tilde{K}-1]$ in (\ref{07}), we have
\begin{align}
& \underbrace{w_{\tilde{K}}  \Big (\prod_{k=1}^{\tilde{K}-1} (f_{k \ell_k }-f_{\tilde{K} \ell_k  }) \Big )}_{w'}  \Big (\prod_{\ell \in [L]\setminus \{\ell_1,\ell_2,\ldots,\ell_{\tilde{K}-1}\}}(x_\ell-f_{\tilde{K}\ell }) \Big )\equiv \nonumber \\
&  \underbrace{ z_{\tilde{K}}  \Big (\prod_{k=1}^{\tilde{K}-1} (f_{k \ell_k  }-g_{\tilde{K} \ell_k  })}_{z'} \Big ) \Big (\prod_{\ell \in [L]\setminus \{\ell_1,\ell_2,\ldots,\ell_{\tilde{K}-1}\}}(x_\ell-g_{\tilde{K}\ell  }) \Big ) .\label{09}
\end{align}
We notice that the above polynomial identity holds due to the fact that each term in the summation in the LHS of (\ref{07}), which corresponds to some $k \in [\tilde{K}-1]$, becomes zero, since we have set $x_{\ell_k}=f_{k \ell_k  }$. Also, for the RHS of (\ref{07}), the  $k^{\text{th}}$ term  in the summation, for any $k\in[\tilde{K}-1]$ becomes zero, due to the fact that we have set $x_{\ell_{k'}} =  f_{k' \ell_{k'}} = g_{k\ell_{k'}}$ in (\ref{07}), where $k' = \pi^{-1}_1(k)$.

Since the variable  $\ell_k$ has been assumed to  be strongly separable in $(F_{1:\tilde{K}};\bold{w})$,  for any $k\in [\tilde{K}-1]$,  we conclude that 
 $w' := w_{\tilde{K}}\prod_{k=1}^{\tilde{K}-1} (f_{k \ell_k }-f_{\tilde{K} \ell_k  }) \neq 0$.
 This shows that (\ref{09})  is a non-zero polynomial. 
Hence by the identity of two (non-zero) polynomials in (\ref{09}), we conclude that 
 $f_{\tilde{K}\ell  }=g_{\tilde{K}\ell  }$ for any $\ell \in [L]\setminus \{\ell_1,\ell_2,\ldots,\ell_{\tilde{K}-1}\}$ and also $w' = z'$.

 A similar argument can be established to show that  we have
 $f_{\tilde{K}\ell  }=g_{\tilde{K}\ell  }$ for any $\ell \in [L]\setminus \{\ell_{\tilde{K}},\ell_{\tilde{K}+1},\ldots,\ell_{2\tilde{K}-2}\}$.
 Combining the results shows that $f_{\tilde{K}\ell  }=g_{\tilde{K}\ell  }$ for any $\ell \in [L]$, since we have
 \begin{align}
 [L] =   ([L]\setminus \{\ell_1,\ell_2,\ldots,\ell_{\tilde{K}-1}\})   \bigcup ( [L]\setminus \{\ell_{\tilde{K}},\ell_{\tilde{K}+1},\ldots,\ell_{2\tilde{K}-2}\}).
\end{align}
 
 By applying this fact to (\ref{09}) and using the identity $w' = z'$, we conclude that that $w_{\tilde{K}}=z_{\tilde{K}}$.
Hence,  the term $k=\tilde{K}$ in the summation in (\ref{07}) can be canceled from two sides. This yields that 
\begin{align}
\sum_{k=1}^{\tilde{K}-1}w_k  \prod_{\ell =1}^L (x_{\ell}-f_{k\ell }) \equiv  
\sum_{k=1}^{\tilde{K}-1}z_k  \prod_{\ell =1}^L (x_{\ell}-g_{k\ell }) . \label{10}
\end{align}
By applying the induction hypothesis to  (\ref{10}), we conclude that  there is a  permutation $\psi$ on $[\tilde{K}-1]$ with the following property
\footnote{
Actually  the permutation $\psi$ is equal to  the permutation $\pi_1$, which is defined previously. However, we do not use this identity and so it is not needed to prove it. 
}. For any $k\in [\tilde{K}-1]$ and $\ell \in [L]$, we have 
$w_k=z_{\psi(k)}$ and $f_{k \ell  }=g_{\psi(k) \ell }$. Combining the results shows that for the permutation $\pi$ on $[\tilde{K}]$, which is defined as 
\[ \pi(k) =
  \begin{cases}
   \psi(k)       & \quad \text{if } k \in [\tilde{K}-1]\\
    \tilde{K}  & \quad \text{if } k=\tilde{K}
  \end{cases}
\]
we have $(F_{1:\tilde{K}};\bold{w})=(G^{\pi}_{1:\tilde{K}};\bold{z}^{\pi})$. This shows that $  (F_{1:\tilde{K}};\bold{w})  \approx (G_{1:\tilde{K}};\bold{z}) $ and completes the proof.

\textit{Case two. }
  Now assume that  the RHS of the summation in (\ref{08}) has exactly $\tilde{K}$ non-zero terms. 
  We notice that the proof of the case one does not depend on the location of the first chosen strongly separable variable.
  In other words, if for some $\ell \in [L]$ and some  $k \in [\tilde{K}]$, where $\ell$ is the location of a strongly separable variable in $(F_{1:\tilde{K}};\bold{w})$, it is possible to set $x_{\ell}=f_{k\ell  }$  in (\ref{07}), such that the assumption in the  case one  holds, then the proof is completed. Hence,  we assume that for any $\ell $, which is the location of a strongly  separable variable  of $(F_{1:\tilde{K}};\bold{w})$, and for any $k\in [\tilde{K}]$, if we set $x_{\ell}=f_{k \ell }$ in (\ref{07}), then the RHS of the result has exactly $\tilde{K}$ non-zero terms, i.e., $f_{k \ell  }\neq g_{k' \ell  }$ for any $k', k \in [\tilde{K}]$ and $z_k \neq 0$ for any $k \in [\tilde{K}]$. Denote the locations of some strongly separable variables in $(F_{1:\tilde{K}};\bold{w})$ by $1 \le\ell_1<\ell_2<\ldots<\ell_{2\tilde{K}-2}\le L-1$. Note that we still assumed that the variable $\ell = L$ is  strongly separable. Without loss of generality, assume that $\ell_{2\tilde{K}-2}=L-1$.  Following by these  assumptions, we set $x_{L-1}=g_{(\tilde{K}-1)(L-1) }$ in (\ref{08}) and conclude that 
\begin{align}
\sum_{k=1}^{\tilde{K}-1} \underbrace{
w_k  a_k}_{w''_k}\prod _{\ell =1}^{L-2}(x_{\ell}-f_{k\ell }) \equiv  \sum_{k \in [\tilde{K}]\setminus \{\tilde{K}-1\}}
\underbrace{z_k b_k}_{z''_k}
\prod _{\ell =1}^{L-2}(x_{\ell}-g_{k\ell }), \label{11} 
\end{align}
where $a_k :=(f_{\tilde{K} L  }-f_{k L  }) (g_{(\tilde{K}-1) (L-1)  }-f_{k(L-1) })$ and $b_k:= (f_{\tilde{K} L  }-g_{k L  }) (g_{(\tilde{K}-1) (L-1)  }-g_{k(L-1) })
 $ for any $k\in [\tilde{K}]$.  Note that because of the discussed considerations, two sides of the summation in (\ref{11}) have exactly $\tilde{K}-1$ terms in two sides, i.e.,  $a_k \neq 0$ for any $k\in [\tilde{K}-1]$ and $b_k \neq 0$ for any $k \in [\tilde{K}] \setminus \{\tilde{K}-1\}$. 
 Also, the number of the residual strongly separable variables of the corresponding problem of the LHS of (\ref{11}) is at least $2\tilde{K}-3$. Hence,  using the induction hypothesis, we conclude that there is a bijection mapping $\phi: [\tilde{K}-1]\rightarrow [\tilde{K}]\setminus \{\tilde{K}-1\}$  such that for any $\ell \in [L-2]$ and $k \in [\tilde{K}-1]$
 we have $f_{k\ell  }=g_{ \phi (k) \ell  }$. Observe that because $\tilde{K}\ge 2$, we have $\ell_1 \in [L-2]$. This shows that $f_{k\ell_1  }=g_{ \phi (k) \ell_1  }$ for any $k \in [\tilde{K}-1]$. Note that this is  contradiction with the assumption that we make for the case two, where now if we set $x_{\ell_1} = f_{1 \ell_1 }$ in (\ref{07}), then the RHS of the result has less than $\tilde{K}$ terms. This shows that  it is impossible that the first case does not happen. This completes the proof.

\end{proof}

\subsection{Proof of the sufficiency part of Theorem 1 for $M\ge 3$} 
In this part, we focus on  larger state spaces  than  binaries. The main idea for the proof is that by introducing some auxiliary binary mixture models, we can use the result of Theorem \ref{Theorem3}. The auxiliary binary mixture models are generated based on the projection of the variables into binary spaces. This allows us to use Theorem \ref{Theorem3}.

Consider latent variables  $(F_{1:K};\bold{w}) \in \Theta_{K,L,M}$, where $\mathcal{L}_s(F_{1:K};\bold{w})\ge 2K-1$ and $M\ge 3$.  Also assume that   $(G_{1:K};\bold{z}) \in \overline{\Theta}_{K,L,M}$  and $\bold{f}=\bold{g}$.  Note that $\bold{f},\bold{g}$ are  the mixture distributions of the problems. We  want to show that $(G_{1:K};\bold{z}) \approx (F_{1:K};\bold{w})$. 
 We introduce two auxiliary binary mixture models $(\tilde{F}_{1:K};\bold{w})\in \Theta_{K,L,2}$ and $(\tilde{G}_{1:K};\bold{z})\in \overline{\Theta}_{K,L,2}$  as follows. For any $k\in [K]$ and $\ell \in [L]$, let $\tilde{f}_{k\ell 1}=f_{k \ell 1}$ and $\tilde{g}_{k\ell 1}=g_{k \ell 1}$. Also  let  $\tilde{f}_{k \ell 2} = 1-f_{k \ell 1}$ and  $\tilde{g}_{k \ell 2} = 1-g_{k \ell 1}$.  Note that by the definition of the strongly separable variables, they do not waste by this transformation. 
 In other words, if the $\ell^{\text{th}}$ variable is strongly separable in $({F}_{1:K};\bold{w})$, then it is strongly separable in $(\tilde{F}_{1:K};\bold{w})$, too. 
 Hence,  we have $\mathcal{L}_s(\tilde{F}_{1:K};{\bold{w}})\ge\mathcal{L}_s(F_{1:K};\bold{w})\ge 2K-1 $.
  
 \begin{lemma}\label{lemma2}  $C_{(\tilde{F}_{1:K};\bold{w})}(x_{1:L}) \equiv C_{(\tilde{G}_{1:K};\bold{z})}(x_{1:L})$.
 \end{lemma}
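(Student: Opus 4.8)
The plan is to compare the two characteristic polynomials coefficient by coefficient and to recognize each coefficient as a signed copy of a marginal of the original $M$-ary mixture distribution; since $\bold{f}=\bold{g}$ by assumption, all these marginals agree, hence all coefficients agree.

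First I would expand the left-hand side. Because $\tilde{f}_{k\ell 1}=f_{k\ell 1}$,
\begin{align*}
C_{(\tilde{F}_{1:K};\bold{w})}(x_{1:L})
= \sum_{k=1}^K w_k\prod_{\ell=1}^L (x_\ell-f_{k\ell 1})
= \sum_{S\subseteq[L]} \Big((-1)^{|[L]\setminus S|}\sum_{k=1}^K w_k\prod_{\ell\in[L]\setminus S} f_{k\ell 1}\Big)\prod_{\ell\in S}x_\ell ,
\end{align*}
so the coefficient of the monomial $\prod_{\ell\in S}x_\ell$ equals $(-1)^{|T|}\sum_{k=1}^K w_k\prod_{\ell\in T} f_{k\ell 1}$, where $T:=[L]\setminus S$. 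The crucial observation is that for every $T\subseteq[L]$,
\begin{align*}
\sum_{k=1}^K w_k\prod_{\ell\in T} f_{k\ell 1}
= \sum_{\substack{(m_1,\ldots,m_L)\in[M]^L:\\ m_\ell=1\ \forall \ell\in T}} \bold{f}(m_1,\ldots,m_L),
\end{align*}
which follows by substituting $\bold{f}(m_1,\ldots,m_L)=\sum_k w_k\prod_\ell f_{k\ell m_\ell}$ and using $\sum_{m\in[M]} f_{k\ell m}=1$ to collapse every factor with index $\ell\notin T$. In words, this coefficient is, up to the sign $(-1)^{|T|}$, the probability under the mixture that every variable in $T$ equals $1$, i.e. a marginal of $\bold{f}$.

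Running the identical computation for $(\tilde{G}_{1:K};\bold{z})$ shows that the coefficient of $\prod_{\ell\in S}x_\ell$ in $C_{(\tilde{G}_{1:K};\bold{z})}(x_{1:L})$ equals $(-1)^{|T|}$ times the same marginal of $\bold{g}$. Since $\bold{f}=\bold{g}$, the two coefficients coincide for every $S\subseteq[L]$, which is exactly the polynomial identity $C_{(\tilde{F}_{1:K};\bold{w})}(x_{1:L})\equiv C_{(\tilde{G}_{1:K};\bold{z})}(x_{1:L})$. There is no substantial obstacle here; the only point requiring care is the bookkeeping that pairs the monomial indexed by $S$ with the marginal over $T=[L]\setminus S$, together with the normalization $\sum_m f_{k\ell m}=1$ that makes the marginalized-out coordinates disappear. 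Equivalently, one may note that $C_{(\tilde{F}_{1:K};\bold{w})}$ is multilinear, that its value at any point of $\{0,1\}^L$ is $\pm$ a marginal of $\bold{f}$, and that a multilinear polynomial in $L$ variables is determined by its values on $\{0,1\}^L$.
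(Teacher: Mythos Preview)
Your proof is correct and follows essentially the same approach as the paper's. Both arguments reduce the polynomial identity to showing that $\sum_{k=1}^K w_k\prod_{\ell\in I} f_{k\ell 1}=\sum_{k=1}^K z_k\prod_{\ell\in I} g_{k\ell 1}$ for every $I\subseteq[L]$, and then recognize each such quantity as the marginal $\sum_{\bold{m}\in[M]^L:\,m_\ell=1\ \forall\ell\in I}\bold{f}(m_1,\ldots,m_L)$, which agrees with the corresponding marginal of $\bold{g}$ since $\bold{f}=\bold{g}$.
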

 \begin{proof}
 See appendix B.
 \end{proof}
Using Lemma \ref{lemma2} and Theorem \ref{Theorem3}, we conclude that $(\tilde{F}_{1:K};{\bold{w}})\approx (\tilde{G}_{1:K};{\bold{z}})$.  This implies that there is a permutation $\pi$ on $[K]$ such that $(\tilde{F}_{1:K};{\bold{w}})=(\tilde{G}^{\pi}_{1:K};{\bold{z}}^{\pi})$.  This shows that for any $\ell \in [L]$ and any $k\in [K]$ we have $\tilde{f}_{k\ell 1}=\tilde{g}_{\pi(k)\ell 1} $. Using the definitions of the auxiliary problems,  we conclude that $f_{k\ell 1}=g_{\pi(k)\ell 1} $ for any $\ell \in [L]$ and $k\in [K]$.  It is also concluded that $w_k=z_{\pi(k)}$ for any $k\in[K]$.

 Now  we claim that $(F_{1:K};\bold{w})=(G^{\pi}_{1:K};\bold{z}^{\pi})$ and this   completes the proof. For this purpose, we need to show that for any $\ell \in [L]$, $k\in[K]$ and $m\in[M]\setminus \{1\}$ we have $f_{k\ell m}=g_{\pi(k)\ell m} $. Using the symmetry in  the problem, it suffices to prove that $f_{112}=g_{\pi(1)12}$.
 
  Again, we introduce  two  auxiliary binary mixture models $(F'_{1:K};\bold{w})\in \Theta_{K,L,2}$ and $(G'_{1:K};\bold{z})\in \overline{\Theta}_{K,L,2}$  as follows.   For any $k\in [K]$ and $\ell \in [L]\setminus \{1\}$, let ${f}'_{k\ell 1}=f_{k \ell 1}$ and   ${g}'_{k\ell 1}=g_{k \ell 1}$. For any $k\in[K]$,  let  ${f}'_{k1 1}=f_{k 12}$ and  ${g}'_{k1 1}=g_{k 12}$. Also, let  ${f'}_{k\ell 2} = 1-{f'}_{k\ell 1}$ and  ${g'}_{k\ell 2} = 1-{g'}_{k\ell 1}$ for any $k \in [K]$ and $\ell \in [L]$. Similarly,  we have $\mathcal{L}_s({F'}_{1:K};\bold{w})\ge\mathcal{L}_s({F}_{1:K};\bold{w})\ge 2K-1 $.

 \begin{lemma}\label{lemma3}  $C_{({F'}_{1:K};\bold{w})}(x_{1:L}) \equiv C_{({G'}_{1:K};\bold{z})}(x_{1:L})$.
 \end{lemma}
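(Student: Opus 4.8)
The plan is to mirror the proof of Lemma~2 given in Appendix~B. The essential observation is that, exactly as in that proof, each linear factor $x_\ell - f'_{k\ell 1}$ occurring in $C_{(F'_{1:K};\bold{w})}(x_{1:L})$ can be written as a linear combination of the entries of the full frequency vector $\bold{f}_{k\ell}\in\overline{\Delta}_M$, by using $\sum_{m=1}^M f_{k\ell m}=1$. Concretely, for $\ell\in[L]\setminus\{1\}$ put $c_{\ell 1}:=x_\ell-1$ and $c_{\ell m}:=x_\ell$ for $m\in[M]\setminus\{1\}$, and for $\ell=1$ put $c_{12}:=x_1-1$ and $c_{1m}:=x_1$ for $m\in[M]\setminus\{2\}$. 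Since $f'_{k11}=f_{k12}$ and $f'_{k\ell 1}=f_{k\ell 1}$ for $\ell\ge 2$, a one-line computation from $\sum_m f_{k\ell m}=1$ gives $x_\ell-f'_{k\ell 1}=\sum_{m=1}^M f_{k\ell m}\,c_{\ell m}$ for every $k\in[K]$ and $\ell\in[L]$.

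Next I would substitute this identity into the definition of the characteristic polynomial and expand the product over $\ell$:
\begin{align*}
C_{(F'_{1:K};\bold{w})}(x_{1:L}) &= \sum_{k=1}^K w_k \prod_{\ell=1}^L \Big(\sum_{m=1}^M f_{k\ell m}\,c_{\ell m}\Big) \\
&= \sum_{(m_1,\ldots,m_L)\in[M]^L}\Big(\sum_{k=1}^K w_k\prod_{\ell=1}^L f_{k\ell m_\ell}\Big)\prod_{\ell=1}^L c_{\ell m_\ell} \\
&= \sum_{(m_1,\ldots,m_L)\in[M]^L}\bold{f}(m_1,\ldots,m_L)\prod_{\ell=1}^L c_{\ell m_\ell}.
\end{align*}
Thus $C_{(F'_{1:K};\bold{w})}$ is a fixed linear functional of the mixture distribution $\bold{f}$, whose coefficients depend only on $L$ and $M$ (through the $c_{\ell m}$) and not on the latent parameters. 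The identical computation applies verbatim to $(G'_{1:K};\bold{z})$, since the frequency vectors $\bold{g}_{k\ell}$ also lie in $\overline{\Delta}_M$, giving $C_{(G'_{1:K};\bold{z})}(x_{1:L})=\sum_{(m_1,\ldots,m_L)}\bold{g}(m_1,\ldots,m_L)\prod_\ell c_{\ell m_\ell}$. Because $\bold{f}=\bold{g}$ by hypothesis, the two polynomials have the same coefficients, i.e. $C_{(F'_{1:K};\bold{w})}(x_{1:L})\equiv C_{(G'_{1:K};\bold{z})}(x_{1:L})$.

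The only point requiring attention — and the sole place this differs from Lemma~2 — is the choice of the coefficients $c_{1m}$ for the first variable: because the binary projection defining $(F'_{1:K};\bold{w})$ retains state $m=2$ rather than $m=1$ in that coordinate, the $-1$ must sit in $c_{12}$ instead of $c_{11}$. Verifying $x_1-f_{k12}=f_{k12}(x_1-1)+\sum_{m\neq 2}f_{k1m}\,x_1$ is immediate from $\sum_m f_{k1m}=1$, so there is no genuine obstacle here; the whole argument is a bookkeeping variant of the proof of Lemma~2.
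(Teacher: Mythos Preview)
Your argument is correct and follows essentially the same idea as the paper's proof in Appendix~C: both show that $C_{(F'_{1:K};\bold{w})}(x_{1:L})$ is a fixed linear functional of the mixture distribution $\bold{f}$ (with coefficients depending only on $x_{1:L}$), so that $\bold{f}=\bold{g}$ forces equality of the characteristic polynomials. The paper carries this out coefficient-by-coefficient via the expansion~(\ref{5}) and a case split on whether $1\in I$, whereas your device of writing $x_\ell-f'_{k\ell 1}=\sum_m f_{k\ell m}\,c_{\ell m}$ and expanding the product directly is a slightly more streamlined packaging of the same computation.
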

\begin{proof}
See appendix C.
\end{proof}

Using Lemma \ref{lemma3} and Theorem \ref{Theorem3}, we conclude that $({F'}_{1:K};{\bold{w}})\approx ({G'}_{1:K};{\bold{z}})$. 
This yields that there is a permutation $\psi$ on $[K]$,  such that we have
  $({F'}_{1:K};\bold{w})=({G'}_{1:K}^{\psi};\bold{z}^{\psi})$. Hence,  for any  $\ell \in [L]$ and any $k\in [K]$,  we have ${f'}_{k\ell 1}={g'}_{\psi(k)\ell 1}$. Thus,  we conclude that $f_{k\ell 1}=g_{\psi(k)\ell 1}$ for any $\ell \in  [L]\setminus \{1\} $ and $k\in [K]$.  Choose $\ell \in [L]\setminus \{1\}$ such that the $\ell^{\text{th}}$ variable is strongly separable in  $({F}_{1:K};\bold{w})$. This is possible due to  the assumption of the  existence of at least $2K-1$ strongly separable variables
\footnote{For the case $K=1$, this assumption may be incorrect. However, the proof for the case $K=1$ is trivial and can be done directly.}
 in $({F}_{1:K};\bold{w})$. We claim that $\psi = \pi$. Note that for any $k\in[K]$ we have $f_{k\ell 1}=g_{\pi(k)\ell 1} = f_{\psi^{-1}(\pi(k)) \ell 1}$. 
 Note that because of the strong  separability of the $\ell^{\text{th}}$ variable,
 the set $\{f_{k\ell 1} \in [0,1] | k \in [K]\}$ has exactly $K$ elements. This shows that $\psi^{-1}(\pi(k))=k$ or $\psi(k)=\pi(k)$ for any $k \in [K]$. Hence we have $\psi=\pi$. 
 
 Now using $({F'}_{1:K};\bold{w})= ({G'}^{\psi}_{1:K};\bold{z}^{\psi})= ({G'}^{\pi}_{1:K};\bold{z}^{\pi})$,  we conclude that ${f'}_{111}={g'}_{\pi(1)11}$. It is also assumed that  ${f'}_{111}=f_{112}$ and ${g'}_{\pi(1)11}=g_{\pi(1)12}$. Hence,  we have $f_{112}=g_{\pi(1)12}$. Similarly,  we can conclude that $(F_{1:K};\bold{w})=(G_{1:K}^{\pi};\bold{z}^{\pi})$ and  hence, the proof is completed.

\subsection{Proof of the necessary part of Theorem 1}

In this part, for any $\overline{L} \in [\min(2K-2,L)]\cup \{0\}$ we introduce  a problem $(F_{1:K};\bold{w}) \in \Theta_{K,L,M}$ such that  $\mathcal{L}_s(F_{1:K};\bold{w}) =  \overline{L}$ and $(F_{1:K};\bold{w}) $  is not identifiable. In particular, we introduce two problems $(F_{1:K};\bold{w}) ,(G_{1:K};\bold{z})\in \Theta_{K,L,M}$, where $\mathcal{L}_s(F_{1:K};\bold{w}) = \mathcal{L}_s(G_{1:K};\bold{z}) =  \overline{L}$, such that $\bold{f} = \bold{g}$ (they have the  same mixture distributions) and $(F_{1:K};\bold{w})  \not \approx (G_{1:K};\bold{z})$.

For any positive constants $\alpha$ and $\beta$, we introduce two problems $(F_{1:K};\bold{w}) ,(G_{1:K};\bold{z})\in \Theta_{K,L,M}$ as follows\footnote{The case $\overline{L} = 0$ is trivial. Note that if two mixture components   have the same frequencies, then $\overline{L}=0$ and $(F_{1:K};\bold{w}) $ is not identifiable, where the probability of  sampling from each of that two mixture components can not be determined from the mixture distribution. Hence, we assume that $\overline{L}\ge 1.$
}. First we set 
$w_k =  {  2K- 1 \choose  2k-2}/{2^{2K-1}}$ and $z_k =  {  2K- 1 \choose  2k-1}/{2^{2K-1}}$ for any $k \in [K]$. Note that $\sum_{k=1}^K w_k=\sum_{k=1}^K z_k = 1$. Then, for any $k\in [K]$, $\ell \in [\overline{L}]$ and $m \in [M-1]$, we define $f_{k\ell m} = \alpha (2k-2) + \beta m $ and  $g_{k\ell m} = \alpha (2k-1) + \beta m $. Also for any $k\in [K]$, $\ell \in [L] \setminus  [\overline{L}]$ and $m \in [M-1]$, we set $f_{k\ell m}=g_{k \ell m} = 1/M$. The values of $f_{k\ell M}$ and $g_{k\ell M}$ are determined by the condition   $\sum_{m \in [M]}f_{k\ell m}=\sum_{m \in [M]}g_{k\ell m}=1.$ Note that  in this definition, $(F_{1:K};\bold{w}) ,(G_{1:K};\bold{z})\in \Theta_{K,L,M}$ if we choose $\alpha$ and $\beta$ positive and small enough. 
Now we claim that these two problems work for the proof of the necessary part of Theorem \ref{Theorem1}.

 First we notice that  $\mathcal{L}_s(F_{1:K};\bold{w}) = \mathcal{L}_s(G_{1:K};\bold{z}) =  \overline{L}$. 
 Second, it is obvious that $(F_{1:K};\bold{w})  \not \approx (G_{1:K};\bold{z})$, because the frequencies of two problems in the first $\ell \in [\overline{L}]$ variables are essentially different from each other. 
 Hence, for completing the proof, it remains to show that we have $\bold{f} = \bold{g}$. In particular, we are interested to show that for any $\bold{m}=(m_1,m_2\ldots,m_L) \in [M]^L$, we have $\bold{f}{(m_1,m_2\ldots,m_L)}  = \bold{g}{(m_1,m_2\ldots,m_L)},$ or  equivalently,  $\sum_{k=1}^K w_k \prod_{\ell=1}^{L} f_{k \ell m_{\ell}}= \sum_{k=1}^K z_k \prod_{\ell=1}^{L} g_{k \ell m_{\ell}}$. First we state the following lemma about the defined problems $(F_{1:K};\bold{w})$ and  $(G_{1:K};\bold{z})$.

\begin{lemma}\label{lemma4}
If  for any   $I  \subseteq [\overline{L}]$ and $\bold{m}=(m_\ell)_{\ell \in I} \in [M-1]^{|I|}$ we have
\footnote{If $I = \emptyset$, we define $\prod_{\ell \in  I} f_{k \ell m_{\ell}}=1.$}
 $\sum_{k=1}^K w_k \prod_{\ell \in  I} f_{k \ell m_{\ell}}= \sum_{k=1}^K z_k \prod_{\ell \in I}  g_{k \ell m_{\ell}}$,  then  $\bold{f} = \bold{g}$. 
\end{lemma}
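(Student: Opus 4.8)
The plan is to reduce the full identity $\mathbf{f}=\mathbf{g}$ to the simpler family of identities in the hypothesis by exploiting the structure of the constructed problems: in the coordinates $\ell \in [L]\setminus[\overline{L}]$ the two problems are literally identical, so those coordinates contribute a common multiplicative constant that can be factored out, and in the coordinates $\ell \in [\overline{L}]$ the frequency $f_{k\ell M}$ (resp. $g_{k\ell M}$) is not of the affine form $\alpha(2k-2)+\beta m$ but is instead determined by the normalization, so it must be handled separately. First I would fix an arbitrary $\mathbf{m}=(m_1,\dots,m_L)\in[M]^L$ and split the product $\prod_{\ell=1}^L f_{k\ell m_\ell}$ according to whether $\ell\in[\overline{L}]$ or not. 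For $\ell\notin[\overline{L}]$ we have $f_{k\ell m_\ell}=g_{k\ell m_\ell}$ independent of $k$, so $\prod_{\ell\in[L]\setminus[\overline{L}]} f_{k\ell m_\ell} = \prod_{\ell\in[L]\setminus[\overline{L}]} g_{k\ell m_\ell} =: c$ is a constant not depending on $k$, and it suffices to prove $\sum_k w_k \prod_{\ell\in[\overline{L}]} f_{k\ell m_\ell} = \sum_k z_k \prod_{\ell\in[\overline{L}]} g_{k\ell m_\ell}$ for every $(m_\ell)_{\ell\in[\overline{L}]}\in[M]^{\overline{L}}$.

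Next I would deal with the coordinates in $[\overline{L}]$ that take the value $M$. Write $I=\{\ell\in[\overline{L}] : m_\ell\neq M\}$ and its complement $J=[\overline{L}]\setminus I$. For $\ell\in J$, $f_{k\ell M}=1-\sum_{m=1}^{M-1}(\alpha(2k-2)+\beta m)=1-(M-1)\alpha(2k-2)-\beta\binom{M}{2}$, which is an affine function of $(2k-2)$; likewise $g_{k\ell M}$ is the same affine function of $(2k-1)$. So for $\ell\in J$ we may write $f_{k\ell M}=a+b(2k-2)$ and $g_{k\ell M}=a+b(2k-1)$ for constants $a,b$ depending only on $\alpha,\beta,M$, and for $\ell\in I$ we have $f_{k\ell m_\ell}=\alpha(2k-2)+\beta m_\ell$, $g_{k\ell m_\ell}=\alpha(2k-1)+\beta m_\ell$, again affine in $(2k-2)$ resp.\ $(2k-1)$ with the \emph{same} slope/intercept pair across the two problems. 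Thus both $\prod_{\ell\in[\overline{L}]} f_{k\ell m_\ell}$ and $\prod_{\ell\in[\overline{L}]} g_{k\ell m_\ell}$ are, term by term in $k$, the same polynomial $P$ of degree $\le\overline{L}$ evaluated at $(2k-2)$ and at $(2k-1)$ respectively. Expanding $P$ in the basis of falling (or ordinary) powers and using linearity, it therefore suffices to show $\sum_k w_k (2k-2)^t = \sum_k z_k (2k-1)^t$ for every integer $0\le t\le \overline{L}\le 2K-2$.

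Then I would invoke the combinatorial identity behind the choice $w_k=\binom{2K-1}{2k-2}/2^{2K-1}$, $z_k=\binom{2K-1}{2k-1}/2^{2K-1}$: these are exactly the even-indexed and odd-indexed terms of a $\mathrm{Binomial}(2K-1,\tfrac12)$, so $\sum_k w_k (2k-2)^t$ and $\sum_k z_k (2k-1)^t$ are the sums over even and odd values of $j\in\{0,1,\dots,2K-1\}$ of $\binom{2K-1}{j} j^t /2^{2K-1}$; since $2K-1$ is odd, the map $j\mapsto 2K-1-j$ is a bijection between evens and odds that preserves $\binom{2K-1}{j}$, and the two moment sums agree for all $t\le 2K-2$ because the $t$-th moment of the binomial is determined by... actually the cleanest route is: the hypothesis of the lemma, applied with this $I$ and this $\mathbf{m}$, is precisely the statement $\sum_k w_k\prod_{\ell\in I} f_{k\ell m_\ell} = \sum_k z_k\prod_{\ell\in I} g_{k\ell m_\ell}$, and the reduction above shows the general case is a $\mathbb{R}$-linear combination (over choices of which coordinates are set to $M$ and via polynomial expansion) of instances of exactly that hypothesis with smaller or equal index sets. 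So I would organize the argument as: (i) factor out the $[L]\setminus[\overline{L}]$ part; (ii) show that setting some $[\overline{L}]$-coordinates to $M$ produces, after expanding the affine expressions for $f_{k\ell M}$, a linear combination of the hypothesized sums indexed by subsets $I'\subseteq I\subseteq[\overline{L}]$; (iii) conclude $\mathbf{f}=\mathbf{g}$. The main obstacle is step (ii) — carefully verifying that the normalization-determined last frequencies $f_{k\ell M}$ expand into affine combinations that keep everything inside the span of the hypothesized moment identities, and bookkeeping the resulting index sets so nothing of degree exceeding $\overline{L}$ appears; once that is in place, the binomial symmetry (or simply direct appeal to the lemma's hypothesis) finishes it.
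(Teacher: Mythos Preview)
Your final plan (i)–(iii) is the paper's argument: pull out the common factor $1/M^{L-\overline{L}}$ from the coordinates $\ell>\overline{L}$, replace each $f_{k\ell M}$ (for $\ell\in[\overline{L}]$ with $m_\ell=M$) by $1-\sum_{m=1}^{M-1}f_{k\ell m}$, expand the product, and recognize every resulting term as one of the hypothesized sums. One bookkeeping slip: after the expansion the index sets satisfy $I\subseteq I'\subseteq[\overline{L}]$ (they \emph{enlarge}, acquiring those $\ell\in J$ at which a factor $-f_{k\ell m}$ was selected), not $I'\subseteq I$ as you wrote; this is harmless since the hypothesis ranges over all subsets of $[\overline{L}]$ anyway. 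Note also that the paper's proof of this lemma never uses the affine form of $f_{k\ell m}$ at all---only the normalization identity and the fact that $f_{k\ell m}=g_{k\ell m}=1/M$ for $\ell>\overline{L}$---so your detour through polynomials in $(2k-2)$ and the moment identities $\sum_k w_k(2k-2)^t=\sum_k z_k(2k-1)^t$ is a valid alternative route but extraneous here; that affine/moment machinery is exactly what the paper deploys \emph{after} this lemma, via Lemma~5, to verify that the hypothesis actually holds for the constructed pair.
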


\begin{proof}
See appendix D.
\end{proof}

Using Lemma \ref{lemma4}, for the proof of  $\bold{f} = \bold{g}$, it suffices to prove that  for any   $I \subseteq [\overline{L}]$ and $\bold{m}=(m_\ell)_{\ell \in I} \in [M-1]^{|I|}$, the identity $\sum_{k=1}^K w_k \prod_{\ell  \in  I}  f_{k \ell m_{\ell}}= \sum_{k=1}^K z_k \prod_{\ell \in  I}  g_{k \ell m_{\ell}}$ holds. Observe that

\begin{align}
  \sum_{k=1}^K w_k \prod_{\ell \in I} f_{k \ell m_{\ell}} -  \sum_{k=1}^K z_k \prod_{\ell  \in  I }   g_{k \ell m_{\ell}} & = 
    \sum_{k=1}^K   \frac{{  2K- 1 \choose  2k-2}}{2^{2K-1}}\prod_{\ell  \in  I} (\alpha (2k-2) +\beta m_{\ell} )   \\
  &   - 
  \sum_{k=1}^K  \frac{{  2K- 1 \choose  2k-1}}{2^{2K-1}}  \prod_{\ell  \in I}  (\alpha (2k-1) +\beta m_{\ell} )    \nonumber \\
 & = \sum_{i=0}^{2K-1}   \frac{{  2K- 1 \choose  i}}{2^{2K-1}} (-1)^i \prod_{\ell  \in  I}  (\alpha i +\beta m_{\ell} ) \label{100}.
\end{align} 
We want to show that (\ref{100}) is equal to zero. Let us define an $M-1$ variable real function $h(x_{1:M-1}) $ as 
\begin{align}
h(x_{1:M-1}) := \exp(\beta \times  \sum_{m=1}^{M-1} m x_m)(1- \exp(\alpha \times \sum_{m=1}^{M-1} x_m))^{2K-1}.
\end{align}

\begin{lemma}\label{lemma5}
For any $ \bold{t} = (t_1,t_2,\ldots,t_{M-1})^T   \in \{0,1,\ldots,2K-2\}^{M-1} $, such that $t = \sum_{m=1}^{M-1} t_m \le 2K-2$,  we have 
\begin{align}
\frac{\partial^t  h}{\partial x^{t_1}_1  \partial x^{t_2}_2 \cdots  \partial x^{t_{M-1}}_{M-1}} (0,0,\ldots, 0) = 0. 
\end{align}
\end{lemma}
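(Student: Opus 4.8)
The plan is to recognize $h(x_{1:M-1})$ as a composition of an exponential with a function vanishing to high order at the origin, and to read off the partial derivatives from the multivariable Taylor expansion. First I would observe that near $\bold{0}$ the factor $\exp(\beta \sum_m m x_m)$ is an analytic function equal to $1 + O(\|x\|)$, so it contributes no obstruction; the whole point is the second factor. Writing $u(x_{1:M-1}) := \alpha \sum_{m=1}^{M-1} x_m$, we have $1 - \exp(u) = -u - u^2/2! - u^3/3! - \cdots$, which is a power series in $u$ with no constant term. Hence $\bigl(1-\exp(u)\bigr)^{2K-1}$ is a power series in $u$ all of whose terms have degree at least $2K-1$ in $u$, and since $u$ is itself homogeneous linear in $x_{1:M-1}$, every monomial appearing in the Taylor expansion of $\bigl(1-\exp(u)\bigr)^{2K-1}$ has total degree at least $2K-1$ in the variables $x_1,\ldots,x_{M-1}$.

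Next I would multiply by the Taylor series of $\exp(\beta \sum_m m x_m)$, which has terms of every nonnegative total degree starting from $0$. Multiplying two power series, the minimum total degree of a monomial in the product is the sum of the minimum total degrees of the factors, so every monomial in the Taylor expansion of $h$ about $\bold{0}$ has total degree at least $2K-1$. The coefficient of the monomial $\prod_m x_m^{t_m}$ in that expansion is exactly $\frac{1}{t_1!\cdots t_{M-1}!}\cdot \frac{\partial^{t} h}{\partial x_1^{t_1}\cdots \partial x_{M-1}^{t_{M-1}}}(\bold{0})$ where $t = \sum_m t_m$. For any $\bold{t}$ with $t \le 2K-2 < 2K-1$ this monomial has total degree strictly below the minimum occurring degree, so its coefficient is zero, and therefore the corresponding mixed partial derivative at the origin vanishes. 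This is the desired conclusion.

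The only genuinely delicate point is justifying that the Taylor coefficients of $h$ can be computed by formally multiplying the two convergent power series — i.e., that composition and product of analytic functions behave as expected on the level of coefficients — but this is standard: $h$ is a polynomial-coefficient combination of compositions of entire functions with linear maps, hence real-analytic in a neighborhood of $\bold{0}$ (in fact entire), so its Taylor series converges and may be obtained by the usual formal operations on the series of its constituents. I would state this briefly and then carry out the degree bookkeeping above. A clean way to package it: show that one may write $h(x_{1:M-1}) = \sum_{n \ge 2K-1} P_n(x_{1:M-1})$ where each $P_n$ is a homogeneous polynomial of degree $n$, from which Lemma~\ref{lemma5} is immediate by matching degrees; no explicit computation of any individual coefficient is required.

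\begin{proof}
Work in a neighborhood of the origin in $\mathbb{R}^{M-1}$, where $h$ is real-analytic (indeed entire, being built from the entire function $\exp$ by composition with linear maps, multiplication, and taking powers). Set $u := \alpha \sum_{m=1}^{M-1} x_m$ and $v := \beta \sum_{m=1}^{M-1} m x_m$, so that $h = e^{v}\,\bigl(1-e^{u}\bigr)^{2K-1}$. Since $1 - e^{u} = -\sum_{j \ge 1} u^{j}/j!$ has no constant term, its power series consists only of monomials of degree $\ge 1$ in $u$; raising to the power $2K-1$, the power series of $\bigl(1-e^{u}\bigr)^{2K-1}$ consists only of monomials of degree $\ge 2K-1$ in $u$. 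As $u$ is homogeneous linear in $x_1,\ldots,x_{M-1}$, every monomial occurring in the Taylor expansion of $\bigl(1-e^{u}\bigr)^{2K-1}$ about $\bold{0}$ has total degree $\ge 2K-1$ in $x_{1:M-1}$. The Taylor expansion of $e^{v}$ about $\bold{0}$ contains monomials of total degree $\ge 0$. Multiplying these two convergent power series, every monomial appearing in the Taylor expansion of $h$ about $\bold{0}$ has total degree $\ge 2K-1$; equivalently, $h = \sum_{n \ge 2K-1} P_n$ with each $P_n$ homogeneous of degree $n$. Now fix $\bold{t} = (t_1,\ldots,t_{M-1}) \in \{0,1,\ldots,2K-2\}^{M-1}$ with $t := \sum_{m=1}^{M-1} t_m \le 2K-2$. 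The value $\frac{\partial^{t} h}{\partial x_1^{t_1}\cdots \partial x_{M-1}^{t_{M-1}}}(\bold{0})$ equals $t_1!\cdots t_{M-1}!$ times the coefficient of the monomial $\prod_{m=1}^{M-1} x_m^{t_m}$ in the Taylor expansion of $h$ at $\bold{0}$. That monomial has total degree $t \le 2K-2 < 2K-1$, so it does not appear, and the coefficient is $0$. Hence $\frac{\partial^{t} h}{\partial x_1^{t_1}\cdots \partial x_{M-1}^{t_{M-1}}}(0,0,\ldots,0) = 0$, as claimed.
\end{proof}
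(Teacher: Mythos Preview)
Your proof is correct. Both your argument and the paper's rest on the same observation: the factor $(1-e^{u})^{2K-1}$ vanishes to order $2K-1$ at the origin, so any partial derivative of $h$ of total order at most $2K-2$ must vanish there. The paper implements this via the multivariate Leibniz rule, writing $\partial^{\mathbf{t}} h(0)$ as a sum over $\mathbf{u}+\mathbf{v}=\mathbf{t}$ of $\partial^{\mathbf{u}} f(0)\cdot \partial^{\mathbf{v}} g(0)$ and then arguing that each $\partial^{\mathbf{v}} g(0)=0$ because a positive power of $(1-e^{u})$ survives the differentiation. You instead read off the same conclusion from the Taylor expansion: every monomial in the power series of $h$ at $\bold{0}$ has total degree at least $2K-1$, so the low-order Taylor coefficients are zero. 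Your route is slightly cleaner, since it avoids tracking the exact form of $\partial^{\mathbf{v}} g$ (the paper's displayed formula for it is imprecise, though harmless for the conclusion) and reduces the lemma to a single degree-counting observation; the paper's route is more hands-on but arrives at the same place.
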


\begin{proof}
See appendix E.
\end{proof}

Using Lemma \ref{lemma5}, we aim to prove that (\ref{100}) is equal to zero. Note that if we define  $t_m := |\{ \ell \in I : m_\ell = m\}| \in [2K-2] \cup \{0\}$ for any $m \in [M-1]$ and $t := \sum_{m=1}^{M-1} t_m  =  |I| \le \overline{L} \le 2K-2$, then we have 
\begin{align}
\sum_{i=0}^{2K-1}   \frac{{  2K- 1 \choose  i}}{2^{2K-1}} (-1)^i \prod_{\ell  \in  I}  (\alpha i +\beta m_{\ell} ) &=\sum_{i=0}^{2K-1}   \frac{{  2K- 1 \choose  i}}{2^{2K-1}} (-1)^i \prod_{m=1}^{M-1}(\alpha i +\beta m )^{t_m}\\ &\overset{(a)}{=}  \frac{\partial^t  h}{\partial x^{t_1}_1  \partial x^{t_2}_2 \cdots  \partial x^{t_{M-1}}_{M-1}} (0,0,\ldots, 0)  \label{101},
\end{align} 
which is equal to zero based on Lemma \ref{lemma5}. Note that (a) follows from  the expansion of the function $h(x_{1:M-1})$ as follows
\begin{align*}
h(x_{1:M-1}) &= \exp(\beta \times  \sum_{m=1}^{M-1} m x_m)(1- \exp(\alpha \times \sum_{m=1}^{M-1} x_m))^{2K-1} \\&= \sum_{i=0}^{2K-1}\frac{{  2K- 1 \choose  i}}{2^{2K-1}} (-1)^i \exp((\alpha i + \beta m )  \times (\sum_{m=1}^{M-1}  x_m)).
\end{align*}
We are done. 

\section{Proof of Theorem 2}
\label{Proof of Theorem 2}
In this section, we prove the sufficiency of $2K$ weakly separable variables for the identifiability. 
First we note that for the binary state spaces two notions of weakly separable variables and strongly separable variables are equivalent.
Therefore,  according to the proof of Theorem \ref{Theorem1}, we conclude the desired result for    $M=2$. 
Hence we consider the cases that $M\ge 3$.  

We establish a proof based on introducing the auxiliary binary mixture models, which is very similar to the proof of Theorem \ref{Theorem1} for the case $M\ge 3$.  
Consider latent variables  $(F_{1:K};\bold{w}) \in \Theta_{K,L,M}$, where $\mathcal{L}_w(F_{1:K};\bold{w})\ge 2K-1$ and $M\ge 3$.  Also assume that   $(G_{1:K};\bold{z}) \in \overline{\Theta}_{K,L,M}$  and $\bold{f}=\bold{g}$.
This means that the two problems have the same mixture distributions. We  aim to prove that $(G_{1:K};\bold{z}) \approx (F_{1:K};\bold{w})$. 

Without loss of generality, assume that the variables 
$
1 \le \ell_1 <\ell_2<\ldots<\ell_{2K}\le L
$
are weakly separable in  $(F_{1:K};\bold{w})$. 
This means that for any $r \in [2K]$, there is an $m_{r} \in [M]$ such that $f_{k \ell_r m_r} \neq f_{k' \ell_r m_r}$ for any distinct $k,k' \in [K]$. Without loss of generality, 
assume that $m_r = 1$ for any $r \in [2K]$. 
Let us define the set 
$I = \{ \ell_r | r \in [2K] \}$.

Similar to the proof of Theorem \ref{Theorem1} for $M \ge 3$, we introduce two auxiliary binary mixture models $(\tilde{F}_{1:K};\bold{w})\in \Theta_{K,L,2}$ and $(\tilde{G}_{1:K};\bold{z})\in \overline{\Theta}_{K,L,2}$  as follows.
 For any $k\in [K]$ and $\ell \in [L]$, let $\tilde{f}_{k\ell 1}=f_{k \ell 1}$ and $\tilde{g}_{k\ell 1}=g_{k \ell 1}$. Also  let  $\tilde{f}_{k \ell 2} = 1-f_{k \ell 1}$ and  $\tilde{g}_{k \ell 2} = 1-g_{k \ell 1}$.  Note that by the definition of the weakly separable variables, all of the variables $\ell \in I$ are strongly separable in $(\tilde{F}_{1:K};\bold{w})$. 
This means that $\mathcal{L}_s(\tilde{F}_{1:K};{\bold{w}})\ge2K $. 

Using Lemma \ref{lemma2} and Theorem \ref{Theorem1}, we conclude that $(\tilde{F}_{1:K};{\bold{w}})\approx (\tilde{G}_{1:K};{\bold{z}})$.  This means that there is a permutation $\pi$ on $[K]$ such that $(\tilde{F}_{1:K};{\bold{w}})=(\tilde{G}^{\pi}_{1:K};{\bold{z}}^{\pi})$.  This shows that for any $\ell \in [L]$ and any $k\in [K]$ we have $\tilde{f}_{k\ell 1}=\tilde{g}_{\pi(k)\ell 1} $. Using the definitions of the auxiliary problems,  we conclude that $f_{k\ell 1}=g_{\pi(k)\ell 1} $ for any $\ell \in [L]$ and $k\in [K]$.  It is also concluded that $w_k=z_{\pi(k)}$ for any $k\in[K]$.

 Now  we claim that $(F_{1:K};\bold{w})=(G^{\pi}_{1:K};\bold{z}^{\pi})$ and this   completes the proof. For this purpose, we need to show that for any $\ell \in [L]$, $k\in[K]$ and $m\in[M]\setminus \{1\}$ we have $f_{k\ell m}=g_{\pi(k)\ell m} $. Using the symmetry in  the problem, it suffices to prove that $f_{112}=g_{\pi(1)12}$.
 
 Again, we introduce  two  auxiliary binary mixture models $(F'_{1:K};\bold{w})\in \Theta_{K,L,2}$ and $(G'_{1:K};\bold{z})\in \overline{\Theta}_{K,L,2}$  as follows.   For any $k\in [K]$ and $\ell \in [L]\setminus \{1\}$, let ${f}'_{k\ell 1}=f_{k \ell 1}$ and   ${g}'_{k\ell 1}=g_{k \ell 1}$. For any $k\in[K]$,  let  ${f}'_{k1 1}=f_{k 12}$ and  ${g}'_{k1 1}=g_{k 12}$. Also, let  ${f'}_{k\ell 2} = 1-{f'}_{k\ell 1}$ and  ${g'}_{k\ell 2} = 1-{g'}_{k\ell 1}$ for any $k \in [K]$ and $\ell \in [L]$. 
 
 Now two cases may occur. First assume that $1 \not \in I$. In this case, we have the inequality $\mathcal{L}_s({F'}_{1:K};\bold{w})\ge 2K$, due to the weak separability of the variables of the set $I$. 
 
 For the second case, assume that $1 \in I$. This shows that when we project the first variable into the specific binary space, which is defined, the first weakly separable variable may waste. However, the other weakly separable variables of  $({F}_{1:K};\bold{w})$ hold in $({F'}_{1:K};\bold{w})$. Thus, we conclude that $\mathcal{L}_s({F'}_{1:K};\bold{w})\ge 2K-1$.
 
Hence, for the two cases we conclude that $\mathcal{L}_s({F'}_{1:K};\bold{w})\ge 2K-1$. The rest of the proof is exactly similar to the sufficiency proof of Theorem \ref{Theorem1} for the case $M \ge 3$ and so it is omitted. We are done.

\begin{remark} \normalfont
In the worst-case regime, our counterexample for the identifiability of mixture models with less than $2K-1$ strongly separable variables is also valid for studying the  weakly separable variables. In other words, there are mixture models with less than $2K-1$ weakly separable variables which are not identifiable. Hence, the optimal threshold for the weakly separable variables is achieved in this paper  within at most one variable. 
\end{remark}

\appendix

\section{Proof of Lemma 1}
In order to prove, it suffices to show that for any $(F_{1:K};\bold{w}),(G_{1:K};\bold{z})\in \overline{\Theta}_{K,L}$ with mixture distributions  $\bold{f}$ and $\bold{g}$,   the identity 
 $\bold{f}=\bold{g}$ implies $C_{(F_{1:K};\bold{w})}(x_{1:L})  \equiv C_{(G_{1:K};\bold{z})}(x_{1:L})$ and vice versa.  
 This is due to the definitions of the paper. 

  First assume that  we have the identity 
$C_{(F_{1:K};\bold{w})}(x_{1:L}) \equiv  C_{(G_{1:K};\bold{z})}(x_{1:L}).$
 Note that 
\begin{align}
C_{(F_{1:K};\bold{w})}(x_{1:L})&=\sum_{k=1}^K w_k   \prod_{\ell =1}^L(x_\ell-f_{k\ell})  \nonumber \\
&=\sum_{k=1}^K w_k  \Big \{ \sum_{I\subseteq [L]}  (-1)^{|I|} \Big (\prod_{\ell \in  [L]\setminus I} x_\ell \Big ) \Big (  \prod_{\ell \in I} f_{k \ell } \Big )  \Big \}    \nonumber \\
& = \sum_{I\subseteq [L]}  (-1)^{|I|} \Big (\prod_{\ell \in  [L]\setminus I} x_\ell  \Big )  \Big (   \sum_{k=1}^K w_k    \prod_{\ell \in  I} f_{k \ell }  \Big )  \label{5}.
\end{align}
This implies that for any  $I \subseteq [L]$ we have 
\begin{align}
\sum_{k=1}^K w_k    \prod_{\ell \in  I} f_{k \ell }=\sum_{k=1}^K z_k \prod_{\ell \in  I} g_{k\ell }  \label{03}.
\end{align}
For any $\bold{m}=(m_1,m_2,\ldots,m_L)\in \{1,2\}^L$, define $I_{\bold{m}}:=\{\ell \in [L]: m_\ell = 1\}$. Observe that 
\begin{align}
\bold{f}{(m_1,m_2,\ldots,m_L)} &=  \sum_{k=1}^K w_k  \Big( \prod_{\ell=1}^L   f_{k \ell  m_\ell} \Big) \nonumber \\
&= \sum_{k=1}^K  w_k  \Big( \prod_{\ell \in I_{\bold{m}}}f_{k \ell  } \prod_{\ell \in [L]\setminus I_{\bold{m}}} (1-f_{k \ell  }) \Big)   \nonumber \\
&= \sum_{k=1}^K w_k  \Big( \sum_{I_{\bold{m}}\subseteq I\subseteq [L]}(-1)^{|I|-|I_{\bold{m}}|}\prod_{\ell \in I} f_{k \ell  } \Big)  \nonumber  \\
&=  \sum_{I_{\bold{m}}\subseteq I\subseteq [L]}(-1)^{|I|-|I_{\bold{m}}|}\Big ( \sum_{k=1}^K w_k   \prod_{\ell \in I} f_{k \ell  } \Big ) \label{04}.
\end{align}
Using (\ref{03}) and (\ref{04}),  we conclude that $\bold{f}{(m_1,m_2,\ldots,m_L)}=\bold{g}{(m_1,m_2,\ldots,m_L)}$  for any $\bold{m}=(m_1,m_2,\ldots,m_L)\in \{1,2\}^L$. This shows that $\bold{f}=\bold{g}$  and  concludes the desired result.

 Now for the other side,  assume that $\bold{f}=\bold{g}$. We will show that $C_{(F_{1:K};\bold{w})}(x_{1:L}) \equiv C_{(G_{1:K};\bold{z})}(x_{1:L})$.  Using the identity in  (\ref{5}),  it suffices to show that for any $I\subseteq [L]$ we have
\begin{align}
\sum_{k=1}^K w_k    \prod_{\ell \in  I} f_{k\ell  }=\sum_{k=1}^K  z_k \prod_{\ell \in  I} g_{k \ell  }. \label{06}
\end{align}
Note that for any $I\subseteq [L]$ we  have
\begin{align}
\sum_{k=1}^K w_k   \prod_{\ell \in  I} f_{k \ell  }  &= \sum_{k=1}^K  w_k   \Big( \sum_{\bold{m} \in \{1,2\}^L: I\subseteq I_\bold{m}}\prod_{\ell=1}^L f_{k \ell m_\ell} \Big) \nonumber \\
&= \sum_{\bold{m} \in \{1,2\}^L: I\subseteq I_\bold{m}} \Big(  \sum_{k=1}^K w_k  \prod_{\ell=1}^L  f_{k \ell m_\ell}   \Big) \nonumber \\
& = \sum_{\bold{m} \in \{1,2\}^L: I\subseteq I_\bold{m}} \bold{f}{(m_1,m_2,\ldots,m_L)}.
\end{align}
This shows that if $\bold{f}=\bold{g}$, then  for any $I\subseteq [L]$ we can conclude (\ref{06}). Hence, we have $C_{(F_{1:K};\bold{w})}(x_{1:L}) \equiv C_{(G_{1:K};\bold{z})}( x_{1:L})$ and this completes the proof.

\section{Proof of Lemma 2}
Using (\ref{5}),  it suffices  to prove that   for any $I\subseteq [L]$ we have
\begin{align}
\sum_{k=1}^K w_k    \prod_{\ell \in  I} \tilde{f}_{k\ell 1}=\sum_{k=1}^K  z_k \prod_{\ell \in  I} \tilde{g}_{k \ell 1}\label{112}.
\end{align}
For any $\bold{m}=(m_1,m_2,\ldots,m_L)\in [M]^L$, define $I_{\bold{m}}:=\{\ell \in [L]: m_\ell = 1\}$.  Note that for any $I\subseteq [L]$ we  have
\begin{align*}
\sum_{k=1}^K w_k   \prod_{\ell \in  I} f_{k \ell 1}  &= \sum_{k=1}^K  w_k   \Big( \sum_{\bold{m} \in  [M]^L: I\subseteq I_\bold{m}}\prod_{\ell=1}^L f_{k \ell m_\ell} \Big) \nonumber \\
&= \sum_{
\substack{
\bold{m} \in [M]^L\\
I\subseteq I_\bold{m}
}
} \Big(  \sum_{k=1}^K w_k  \prod_{\ell=1}^L  f_{k \ell m_\ell}   \Big) \nonumber \\
& = \sum_{
\substack{
\bold{m} \in [M]^L\\
 I\subseteq I_\bold{m}}
 } \bold{f}{(m_1,m_2,\ldots,m_L)}.
\end{align*}
Hence, using the assumption $\bold{f}=\bold{g}$, for any $I \subseteq [L]$ we  have  $\sum_{k=1}^K w_k    \prod_{\ell \in  I} {f}_{k\ell 1}=\sum_{k=1}^K  z_k \prod_{\ell \in  I} {g}_{k \ell 1}$, which implies (\ref{112}). Thus, the proof is completed.

\section{Proof of Lemma 3}
Similar to the proof of lemma 2, it suffices to show that for any $I\subseteq [L]$ we have
\begin{align}
\sum_{k=1}^K w_k    \prod_{\ell \in  I} {f'}_{k\ell 1}=\sum_{k=1}^K  z_k \prod_{\ell \in  I} {g'}_{k \ell 1}\label{113}.
\end{align}
If $1 \notin I$, similar to the proof of Lemma 2, we conclude (\ref{113}). 
For any $\bold{m}=(m_1,m_2,\ldots,m_L)\in \{1,2\}^L$, define $I_{\bold{m}}:=\{\ell \in [L]: m_\ell = 1\}$. Assume that $1 \in I$. We have 
\begin{align*}
\sum_{k=1}^K w_k   \prod_{\ell \in  I} f'_{k \ell 1}  &= \sum_{k=1}^K  w_k   \Big( \sum_{
\substack{
\bold{m} \in  [M]^L\\
 I \setminus \{1 \}\subseteq I_\bold{m}\\
  m_1 = 2}
  }\prod_{\ell=1}^L f'_{k \ell m_\ell} \Big) \nonumber \\
&= \sum_{
\substack{
\bold{m} \in  [M]^L\\
 I \setminus \{1 \}\subseteq I_\bold{m}\\
  m_1 = 2}
} \Big(  \sum_{k=1}^K w_k  \prod_{\ell=1}^L  f'_{k \ell m_\ell}   \Big) \nonumber \\
& = \sum_{
\substack{
\bold{m} \in  [M]^L\\
 I \setminus \{1 \}\subseteq I_\bold{m}\\
  m_1 = 2}
} \bold{f}_{(m_1,m_2,\ldots,m_L)}.
\end{align*}
Hence, using the assumption $\bold{f}=\bold{g}$, for any $I \subseteq [L]$ we  have  $\sum_{k=1}^K w_k    \prod_{\ell \in  I} {f'}_{k\ell 1}=\sum_{k=1}^K  z_k \prod_{\ell \in  I} {g'}_{k \ell 1}$. This completes the proof.

\section{Proof of Lemma 4}
For the proof of the lemma, it suffices to show that for any $\bold{m}=(m_1,m_2\ldots,m_{L}) \in [M]^L$, we have $\bold{f}{(m_1,m_2\ldots,m_{L}) }=\bold{g}{(m_1,m_2\ldots,m_{L}) }$. Let us define $B_{\bold{m}} := \{\ell \in [\overline{L}] : m_\ell \neq M\}$. We write
\begin{align*}
\bold{f}{(m_1,m_2\ldots,m_{L}) } &=  \sum_{k=1}^K w_k \prod_{\ell=1}^{L} f_{k \ell m_{\ell}} \\
&= \sum_{k=1}^K w_k \prod_{\ell=1}^{\overline{L}} f_{k \ell m_{\ell}}\times (\frac{1}{M^{L-\overline{L}}})  \\
&= \sum_{k=1}^K w_k   (\frac{1}{M^{L-\overline{L}}}) \Big \{\prod_{\ell \in  B_{\bold{m}}} f_{k \ell m_{\ell}} \times \prod_{\ell \in [\overline{L}] \setminus B_{\bold{m}}}(1-\sum_{m\in [M-1]}{f_{k\ell m}}) \Big \} \\ 
&= (\frac{1}{M^{L-\overline{L}}}) \sum_{k=1}^K w_k  \Big \{    \sum_{ 
\substack{
           B_{\bold{m}} \subseteq I \subseteq [\overline{L}]\\
           (n_\ell)_{\ell \in I} \in [M-1]^{|I|}\\
           \forall \ell \in B_{\bold{m}}:  n_\ell = m_\ell
           }
}  
  (-1)^{|I| - |B_{\bold{m}}|}  \prod_{\ell \in I} f_{k \ell n_\ell}
\Big\}  \\
&= (\frac{1}{M^{L-\overline{L}}})   \sum_{ \substack{
           B_{\bold{m}} \subseteq I \subseteq [\overline{L}]\\
           (n_\ell)_{\ell \in I} \in [M-1]^{|I|}\\
           \forall \ell \in B_{\bold{m}}:  n_\ell = m_\ell
           }
}    (-1)^{|I| - |B_{\bold{m}}|}   \sum_{k=1}^K w_k  \prod_{\ell \in I} f_{k \ell n_\ell}.
\end{align*}
Hence, if  $ \sum_{k=1}^K w_k  \prod_{\ell \in I} f_{k \ell n_\ell} =  \sum_{k=1}^K z_k  \prod_{\ell \in I} g_{k \ell n_\ell}$ for any $I \subseteq [\overline{L}]$ and $(n_\ell)_{\ell \in I} \in [M-1] ^{|I|}$, then $\bold{f}{(m_1,m_2\ldots,m_{L}) }=\bold{g}{(m_1,m_2\ldots,m_{L}) }$ for any $\bold{m} \in [M]^L$. This completes the proof.

\section{Proof of Lemma 5}
First define  the two functions 
\begin{align*}
f(x_{1:M-1}) := \exp(\beta \times  \sum_{m=1}^{M-1} m x_m)
\end{align*}
and
\begin{align*}
 g(x_{1:M-1})  := (1- \exp(\alpha \times \sum_{m=1}^{M-1} x_m))^{2K-1}.
\end{align*} 
Observe that $h(x_{1:M-1})  = f(x_{1:M-1})  \times g(x_{1:M-1}) $. Now we write
\begin{align*}
&\frac{\partial^t  h}{\partial x^{t_1}_1  \partial x^{t_2}_2 \cdots  \partial x^{t_{M-1}}_{M-1}} (0,0,\ldots, 0)  = \\ & \sum_{
\substack{
            \bold{u},\bold{v} \in \{0,1,\ldots,2K-2\}^{M-1}\\
          \bold{u}+\bold{v} = \bold{t}}
}
 C(\bold{u},\bold{v}) \times \frac{\partial^u  f}{\partial x^{u_1}_1  \partial x^{u_2}_2 \cdots  \partial x^{u_{M-1}}_{M-1}} (0,0,\ldots, 0) 
\times 
\frac{\partial^v  g}{\partial x^{v_1}_1  \partial x^{v_2}_2 \cdots  \partial x^{v_{M-1}}_{M-1}} (0,0,\ldots, 0),
\end{align*}
where $u = \sum_{m=1}^{M-1} u_m$ and $v = \sum_{m=1}^{M-1} v_m$. Also $C(\bold{u},\bold{v}) \in \mathbb{N}$ is a positive integer constant, which is independent of $x_m$s.

For completing the proof, it suffices to show that 
\begin{align*}
\frac{\partial^v  g}{\partial x^{v_1}_1  \partial x^{v_2}_2 \cdots  \partial x^{v_{M-1}}_{M-1}} (0,0,\ldots, 0) = 0,
\end{align*} 
for any $\bold{v} \in \{0,1,\ldots,2K-2\}^{M-1}$ such that $v = \sum_{m=1}^{M-1} v_m \le 2K-2$.
  Note that   we have
  \begin{align*}
\frac{\partial^v  g}{\partial x^{v_1}_1  \partial x^{v_2}_2 \cdots  \partial x^{v_{M-1}}_{M-1}}  = C(\bold{v}) \times 
(1- \exp(\alpha \times \sum_{m=1}^{M-1} x_m))^{2K-1 - v} ,
\end{align*} 
 for some  constant $C(\bold{v})$. We conclude the desired result as we have $v \le 2K-2< 2K-1$.

\end{document}